\newcommand{\ubar}[1]{\underaccent{\bar}{#1}}
\newtheorem{theorem}{Theorem}[section]
\newtheorem{lemma}{Lemma}[section]
\theoremstyle{remark}
\newtheorem{remark}[theorem]{Remark}
\def \vector#1{\boldsymbol{#1}}
\newcommand {\D} {\displaystyle}
\newcommand{\half}{\frac{1}{2}}
\newcommand {\imbed} {\hookrightarrow}
\newcommand {\OO}[1]{{\mathcal O}\left(#1\right)}
\newcommand {\scal}[2]{\left(#1,#2\right)}
\newcommand{\abs}[1]{\left\lvert #1 \right\rvert}
\newcommand{\vnorma}[1]{\left\|#1\right\|}
\newcommand {\meas}[1] {\left|#1\right|}
\DeclareMathOperator{\di}{d\hspace{-1.5pt}}
\newcommand{\dX}{ \di \X}
\newcommand {\dt}{ \di t}
\newcommand{\ds}{\di s}
\newcommand {\sigmab}{\vector{\sigma}}
\newcommand {\dg}{ \di \sigmab}
\newcommand {\NN } {{\mathbb N}}
\newcommand {\RR } {{\mathbb R}}
\newcommand{\veps}{\varepsilon}
\newcommand {\pdt}{\partial_t}
\newcommand {\I}{[0,T]}
\newcommand {\Iopen}{(0,T)}
\newcommand {\domain}{\Omega}
\newcommand{\vct}[1]{\boldsymbol{#1}}
\newcommand {\normal}{\vct{\nu}}
\newcommand {\lp}[1]{\Leb^{#1} (\domain )}
\DeclareMathOperator{\Leb}{L}
\DeclareMathOperator{\Cont}{C}
\DeclareMathOperator{\Hi}{H}
\newcommand {\hk}[1]{\Hi^{#1}(\domain )}
\newcommand {\lpkIX}[2]{\Leb^{#1}\left(\Iopen,#2\right)}
\newcommand {\cIX}[1]{\Cont\left(\I,#1\right)}
\newcommand {\X}{\vector{x}}
\begin{document}

	\title[ISP semilinear pseudo-parabolic equation]{A time-dependent inverse source problem for a semilinear pseudo-parabolic equation with Neumann boundary condition}

	\author[K.~Van~Bockstal]{Karel Van Bockstal$^1$} 
	\thanks{This work was supported by grant no.~AP23486218
 of the Ministry of  Science and High Education of the Republic
of Kazakhstan (MES RK) and the Methusalem programme of Ghent University Special Research Fund (BOF) (Grant Number 01M01021)}

	\address[1]{Ghent Analysis \& PDE center, Department of Mathematics: Analysis, Logic and Discrete Mathematics\\ Ghent University\\
		Krijgslaan 281\\ B 9000 Ghent\\ Belgium} 
	\email{karel.vanbockstal@UGent.be}

 \author[K.~Khompysh]{Khonatbek Khompysh$^{1,2}$} 

 \address[2]{Al-Farabi Kazakh National University\\ Almaty\\ Kazakhstan}
 \email{konat\_k@mail.ru}
	
	\subjclass[2020]{35A01, 35A02, 35A15, 35R11, 65M12, 33E12}
	\keywords{inverse source problem; pseudo-parabolic equation; Neumann boundary condition;  Rothe's method}
	
	\begin{abstract}  
      In this paper, we study the inverse problem for determining an unknown time-dependent source coefficient in a semilinear pseudo-parabolic equation with variable coefficients and Neumann boundary condition. This unknown source term is recovered from the integral measurement over the domain $\Omega$. Based on Rothe's method, the existence and uniqueness of a weak solution, under suitable assumptions on the data, is established. A numerical time-discrete scheme for the unique weak solution and the unknown source coefficient is designed, and the convergence of the approximations is proven. Numerical experiments are presented to support the theoretical results. Noisy data is handled through polynomial regularisation.
	\end{abstract}
	
	\maketitle
	
	\tableofcontents

%%%%%%%%%%%%%%%%%%%%%
\section{Introduction}
\label{sec:introduction}
%%%%%%%%%%%%%%%%%%%%

We consider an open and bounded Lipschitz domain $\Omega \subset \mathbb{R}^d$ with boundary $\partial\Omega$,  $d \in \NN$. 
 Let $Q_T = (0,T] \times \Omega$ and $\Sigma_T = (0,T] \times \partial\Omega$, where $T>0$ is a given final time.
In the sequel, we consider the following  semilinear pseudo-parabolic equation with variable coefficients and Neumann boundary condition: 
\begin{equation} \label{eq:problem}
\left\{
\begin{array}{rl}
\pdt u(t,\X) -   \nabla \cdot\left(\eta(t,\X) \nabla \pdt u(t,\X)\right)   - \nabla \cdot \left( \kappa(t,\X) \nabla u (t,\X)\right) & \\ 
= F(u(t,\X))+\D p(t,\X)h(t),  & (t,\X) \in Q_T, \\[4pt]
\kappa(t,\X) \nabla u(t,\X) \cdot \normal(\X) = g(t,\X), & (t,\X) \in \Sigma_T, \\[4pt]
u(0,\X) = \tilde{u}_0,  &\X \in \Omega.
\end{array}
\right.
\end{equation}
In this paper, $h(t)$ is unknown and will be recovered from the additional information 
\begin{equation} \label{eq:add:cond}
\int_\Omega  u(t,\X) \dX = m(t), \quad t > 0. 
\end{equation}
In the system (\ref{eq:problem}-\ref{eq:add:cond}), the coefficients $ \eta, \kappa $ and the functions $\tilde{u}_0, p, g, m, F$ are given, whilst $u$ and $h$ are unknown and need to be determined. We look for a weak solution to the problem (\ref{eq:problem}-\ref{eq:add:cond}). 

Equations like \eqref{eq:problem} are called pseudo-parabolic equations, also known as Sobolev equations. These equations describe a range of essential physical processes, such as the unidirectional propagation of nonlinear long waves \cite{Ting:1963,BBM:1972}, the aggregation of populations \cite{Pad:2004}, fluid flow in fissured rock \cite{BZK:1960}, filtration in porous media \cite{6BER:1989}, the unsteady flow of second-order fluids \cite{Hui:1968}, and the motion of non-Newtonian fluids \cite{AKS2011,zvy-2010}, among others.

The measurement \eqref{eq:add:cond} in integral form naturally arises \cite{Cannon1986, POV:2000} and it describes the total or average value of $u$ over the entire domain.  It serves as supplementary information for determining the source term, and it has significant physical meaning.  This condition is particularly relevant in practical applications where local pointwise or instantaneous temperature measurements are prone to large errors or even infeasible (e.g., at high temperatures). In such cases, the global-in-space measurement \eqref{eq:add:cond} provides a more realistic and reliable source of information. For instance, such a condition has been used in studying various physical phenomena, including chemical engineering \cite{cannon1987galerkin,cannon1986diffusion}, thermoelasticity \cite{shi1992design}, heat conduction and diffusion processes \cite{ionkin1977solution,kamynin1964boundary,cannon1963,GRIMMONPREZ2015331} and fluid flow in porous media \cite{ewing1991class}. For example, in \cite{ionkin1977solution}, this condition in problems related heat conduction modeling, indicates the total heat $m(t)$. A similar formulation occurs in thermoelasticity, where the design of periodic contact with a prescribed pattern leads to a heat conduction problem with a given energy input \cite{shi1992design}.

Numerous studies on linear and nonlinear pseudo-parabolic equations have focused on investigating direct problems. Research on inverse problems for pseudo-parabolic equations began with the seminal work of Rundell \cite{R:1980} in 1980, wherein Rundell studied an inverse problem to identify source terms in a linear pseudo-parabolic equation using overspecified boundary data or the final-in-time measurement. Currently, there are dozens of studies on inverse problems for pseudo-parabolic equations; we refer readers to \cite{Fu2023,Fu2023a,AntAA:2020,KhSh:2023,Yaman2012,LyTa:2011,LyVe:2019,HDT:2021,HTI:2024,KHSI:2024} and the references therein.  In \cite{Fu2023}, the authors study the determination of a time-dependent potential coefficient from the nonlocal measurement  $\int_\Omega u(t,\X) \omega(\X)\dX$ ($\left.\omega\right|_{\Gamma} = 0$) in a linear pseudo-parabolic with constant coefficients. Using a fixed point argument, the authors establish the uniqueness of a solution if $\vnorma{\nabla \omega}_{\lp{2}}\ll 1.$ In \cite{Fu2023a}, authors have been obtained similar results also for an inverse problem recovering a spatial-dependent potential coefficient in a linear pseudo-parabolic equation with nonlocal measurement $\int_0^T u (t,\X)\omega(t)\dt$ under some restriction on the positive weight function $\omega$ such as $0<\frac{\|\omega'\|_{\Leb^2(0,T)}}{\|\omega\|_{\Leb^1(0,T)}}\ll 1.$ In \cite{LyTa:2011}, the authors studied an inverse problem of determining the diffusion coefficient $\kappa = \kappa(t)$ in a linear pseudo-parabolic equation
under a measurement on the boundary.
Under certain assumptions and restrictions on the data, the existence and uniqueness of a strong solution were established using an iterative method. Moreover, the inverse problem of determining a time-dependent potential coefficient from integral boundary data is investigated in \cite{LyVe:2019}. The numerical analysis of inverse problems involving the determination of a time-dependent potential coefficient in a one-dimensional linear pseudo-parabolic equation, based on additional boundary information about the solution, is presented in \cite{HDT:2021}. A similar investigation is carried out for the case of a fractional time derivative in \cite{HTI:2024}. We note that numerical and theoretical studies of a spatially dependent inverse source problem for a pseudo-parabolic equation with memory are addressed in \cite{KHSI:2024}.

In the study of time-dependent inverse (source) problems for pseudo-parabolic equations, the measurement form plays a crucial role. For instance, in works such as \cite{AntAA:2020} and \cite{KhSh:2023}, inverse problems for nonlinear pseudo-parabolic equations perturbed by $p-$ Laplacian and nonlinear damping/reaction term have been investigated under a measurement expressed in a specific form, such as $\int_\Omega u(\omega - \Delta \omega)\dX = e(t)$. With this specific nonlocal measurement, the blowing up in a finite time and large time behaviour of solutions of an inverse source problem for a pseudo-parabolic equation with a nonlinear damping term were established in \cite{Yaman2012}. Recently, in \cite{Khompysh2025}, an inverse problem for the linear pseudo-parabolic equation with a memory term was considered under the overdetermination condition in a similar specific form and the authors established the existence of strong solutions, in particular, $h\in \Cont([0,T])$, and  in the 1D case, the numerical solutions were explored by using $B$-spline collocation technique.
However, as we have mentioned above, this measurement form lacks a clear physical justification, although it is mathematically convenient. To the best of our knowledge, there are very few studies on inverse source problems for pseudo-parabolic equations that recover a time-dependent source under the condition $\int_\Omega u(t,\X) \omega(\X)\dX$, in particular \eqref{eq:add:cond}, even in the linear case, we may refer just to \cite{Fu2023}. Therefore, in this work, we study a time-dependent inverse source problem for a semilinear pseudo-parabolic equation under the measurement \eqref{eq:add:cond}.

Various applications of Rothe's method \cite{Rothe1930} for the study of inverse problems for parabolic and hyperbolic evolution equations have been considered by Slodi\v{c}ka and Van Bockstal, see e.g. \cite{Slodicka2014jcam,Slodicka2016b,VanBockstal2017,Kang2018,Siskova2019,VanBockstal2020,VanBockstal2022a,VanBockstal2022b,VanBockstal2022c}. 
In this work, we apply Rothe's method to investigate the inverse source problem for the semilinear pseudo-parabolic equation \eqref{eq:problem} with Neumann boundary condition and the measurement \eqref{eq:add:cond} from both theoretical and numerical perspectives. Specifically, we will establish the existence and uniqueness of a weak solution and develop a numerically efficient algorithm.

We stress that the analytical techniques employed in this paper (Rothe’s method, i.e. time-discretisation, a priori estimates and compactness arguments) are classical and closely related to those in previous works on time-dependent inverse problems solved with the aid of Rothe's method \cite{Slodicka2014jcam,Slodicka2016b,VanBockstal2017,Kang2018,Siskova2019,VanBockstal2020,VanBockstal2022a,VanBockstal2022b,VanBockstal2022c}. However, the main novelty of our study lies in the form of the measurement condition: instead of using the problem-specific integral measurement 
$\int_\Omega u (\omega-\Delta\omega) \dX = e(t)$, which lacks a clear physical motivation. Our contribution is that the inverse source problem with the classical measurement \eqref{eq:add:cond} for semilinear pseudo-parabolic equations with Neumann boundary data remains well-posed for exact data (without any restrictive condition on the data), and can be treated within the established Rothe framework, and that this setting also allows for efficient numerical implementation.
 
The paper is organised as follows. In \Cref{sec:reformulation}, we first state all conditions on the given data, reformulate the inverse problem as a coupled direct problem and formulate its weak formulation. Afterwards, we show the uniqueness of a solution in \Cref{sec:uniqueness} and the existence of the weak solution in \Cref{sec:existence} using Rothe's method. In \Cref{sec:experiments}, the theoretical results are illustrated with some numerical examples in 1D and 2D cases.

%%%%%%%%%%%%%%%%%%%%%
\section{Reformulation of the inverse problem}
\label{sec:reformulation}
%%%%%%%%%%%%%%%%%%%%

In this section, we will derive an expression for $h$ in terms of $u$ and the given data. In this way, we can reformulate the inverse problem as a coupled direct problem. We first summarise the assumptions on the data that we will use to tackle the inverse problem  (\ref{eq:problem}-\ref{eq:add:cond}):
\begin{enumerate}[
\textbf{AS DP}-(1),leftmargin=2.4cm] %\roman*
           \item\label{as:DP:eta} $\eta: \I\times \overline{\Omega} \to \RR$ satisfies $  0 < \ubar{\eta}_0 \le \eta(t,\X)\le \ubar{\eta}_1<\infty;$ 
    \item \label{as:DP:kappa} $\kappa: \I\times \overline{\Omega} \to \RR$ satisfies
    \[
    \begin{cases}
    0<\ubar{\kappa}_0\le \kappa(t,\X)\le \ubar{\kappa}_1<\infty, \ &\text{for a.a. } (t,\X)\in [0,T]\times\overline{\Omega},\\
    \abs{\pdt \kappa(t,\X)}\le \ubar{\kappa}^\prime_1<\infty,\ &\text{for a.a. } (t,\X)\in [0,T]\times \overline{\Omega};
    \end{cases}
    \]
     \item \label{as:DP:f} $F:\mathbb{R} \rightarrow \mathbb{R}$ is  Lipschitz continuous, i.e. 
    \[
     \abs{F(s_1)-F(s_2)}\le L_F \abs{s_1-s_2}, \quad \forall s_1,s_2\in \RR; 
    \]
    \item \label{as:DP:u0} $\tilde{u}_0\in\hk{1};$
    \item \label{as:g} $g \in \Hi^1 \left((0,T], \Leb^2(\partial\Omega)\right)$, so that
    \[
    G := \frac{g}{\kappa} \in \Hi^1 \left((0,T], \Leb^2(\partial\Omega)\right); \ \ 
    \]
    \item \label{as:p} $p\in \mathcal{X}:=\cIX{\lp{2}}$ such that ${\omega}\in \Cont\left(\I\right)$ defined  by 
\[{\omega}(t):=\int_\Omega p(t,\X)\dX\]
satisfies
\[
 {\omega}(t) \neq 0 \text{ for all } t\in \I.
\]
We denote 
\[
 0< \ubar{\omega}_0:=\min_{t\in \I} \abs{{\omega}(t)};% \ \ 
\]
  \item \label{as:m}  $m\in \Hi^1((0,T])$. 
\end{enumerate}

\begin{remark}
Further, we denote the inner product $\scal{\cdot}{\cdot}_X$ by $\scal{\cdot}{\cdot}$ for $X= \lp{2}$ and by $\scal{\cdot}{\cdot}_{\partial\Omega}$  for $X= \Leb^2(\partial\Omega)$. Its associated norm is denoted by $\vnorma{\cdot} = \sqrt{\scal{\cdot}{\cdot}}$ and $\vnorma{\cdot}_{\partial \Omega} = \sqrt{\scal{\cdot}{\cdot}}_{\partial\Omega}$, respectively. 
\end{remark}

\begin{remark}
    From \ref{as:DP:f} it follows that 
\begin{equation}\label{eq:inequality_nonlinear_f}
    \abs{F(s)} \le \abs{F(0)} + L_F \abs{s}, \quad \forall s\in \RR.
    \end{equation}
\end{remark}

\begin{remark}
    In \Cref{thm:existence_inverse_problem}, we will show that $u: \Iopen \to \hk{1}$ is continuous in time. Together with \ref{as:m}, this implies that $m(0)=\int_\Omega\tilde{u}_0(\X)\dX.$
\end{remark}

The approach presented here takes advantage of the Neumann boundary data. For this reason, when integrating the PDE in \eqref{eq:problem} over $\Omega$, using the measurement \eqref{eq:add:cond} and the divergence theorem, we obtain the following expression for $h$ (with $t>0$): 
\begin{multline} \label{eq:expression_h}
h(t)=\frac{1}{{\omega}(t)}\left[m^\prime(t) - \int_{\partial\Omega} \eta(t,\X)\partial_t G(t,\X) \dg_{\X} \right.\\\left. - \int_{\partial\Omega} g(t,\X) \dg_{\X}  - \int_\Omega F(u(t,\X))\dX\right],    
\end{multline}
where we have used \ref{as:p}. Using this expression, we can reformulate the inverse problem  (\ref{eq:problem}-\ref{eq:add:cond}) in the following way: 
\medskip
\begin{center}
Find $u(t)\in \hk{1}$ with $\pdt u(t)\in \hk{1}$ such that for a.a. $t \in \Iopen$ and any $\varphi \in \hk{1}$ it holds that 
\begin{multline}\label{eq:var_for} 
\scal{ \pdt u(t)}{\varphi} + \scal{ \eta(t)\nabla \pdt u(t)}{\nabla \varphi} +  \scal{\kappa(t) \nabla u(t)}{\nabla \varphi} \\
=h(t) \scal{p(t)}{\varphi} + \scal{F(u(t))}{\varphi} +  \scal{ \eta(t)\partial_t G(t)}{\varphi}_{\partial\Omega} + \scal{ g(t)}{\varphi}_{\partial\Omega},    
\end{multline}
with $h\in \Leb^2(0,T)$ given by \eqref{eq:expression_h}.
\end{center}
\medskip

In the next section, we show the uniqueness of a solution to the problem (\ref{eq:expression_h}-\ref{eq:var_for}).

%%%%
\section{Uniqueness of a solution}
\label{sec:uniqueness}
%%%%

 We show the uniqueness of a solution to the problem (\ref{eq:expression_h}-\ref{eq:var_for}) by the energy estimate approach.

%%%%%%%%%%%%%%%%%%%%%
%\subsection{Uniqueness of a solution}

\begin{theorem}\label{thm:uniq_inv_problem}
Let the assumptions \ref{as:DP:eta} until \ref{as:m} be fulfilled. 
    Then, there exists at most one couple $\{u,h\}$ solving problem (\ref{eq:expression_h}-\ref{eq:var_for})  such that
\begin{equation*}
h \in \Leb^2\Iopen, \quad  u \in \cIX{\hk{1}} \quad \text{with} \quad  \pdt u \in \lpkIX{2}{\hk{1}}.
\end{equation*}
\end{theorem}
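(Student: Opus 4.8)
The plan is to argue by energy estimates on the difference of two putative solutions. Suppose $\{u_1,h_1\}$ and $\{u_2,h_2\}$ both solve (\ref{eq:expression_h}-\ref{eq:var_for}) in the stated regularity class, and set $u := u_1 - u_2$ and $h := h_1 - h_2$. Since $\eta$, $\kappa$, $g$, $G$, $m$ and $p$ are common data, subtracting the two weak formulations \eqref{eq:var_for} makes every boundary and data term cancel, leaving, for a.a. $t$ and all $\varphi\in\hk{1}$,
\begin{multline*}
\scal{\pdt u(t)}{\varphi} + \scal{\eta(t)\nabla\pdt u(t)}{\nabla\varphi} + \scal{\kappa(t)\nabla u(t)}{\nabla\varphi} \\ = h(t)\scal{p(t)}{\varphi} + \scal{F(u_1(t)) - F(u_2(t))}{\varphi},
\end{multline*}
together with $u(0) = 0$. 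Likewise, subtracting the two instances of \eqref{eq:expression_h} kills $m^\prime$ and both boundary integrals, so $h(t) = -\,{\omega}(t)^{-1}\int_\Omega\bigl(F(u_1(t)) - F(u_2(t))\bigr)\dX$. Using \ref{as:p} and the Lipschitz bound \ref{as:DP:f}, together with the Cauchy--Schwarz inequality on $\Omega$, this yields the pointwise control $\abs{h(t)} \le \ubar{\omega}_0^{-1} L_F \meas{\Omega}^{1/2}\vnorma{u(t)}$. This is the key observation: it shows that the nonlocal coupling through $h$ is, on the level of differences, linearly dominated by $\vnorma{u}$.

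Next I would test the difference equation with $\varphi = \pdt u(t)$, which is admissible since $\pdt u \in \lpkIX{2}{\hk{1}}$. The first term gives $\vnorma{\pdt u(t)}^2$, the second is nonnegative and bounded below by $\ubar{\eta}_0\vnorma{\nabla\pdt u(t)}^2$ via \ref{as:DP:eta}, and the third is rewritten through the product rule
\[
\scal{\kappa(t)\nabla u(t)}{\nabla\pdt u(t)} = \frac{d}{dt}\,\half\scal{\kappa(t)\nabla u(t)}{\nabla u(t)} - \half\scal{\pdt\kappa(t)\nabla u(t)}{\nabla u(t)},
\]
where the last term is controlled by $\half\ubar{\kappa}'_1\vnorma{\nabla u(t)}^2$ thanks to the bound on $\pdt\kappa$ in \ref{as:DP:kappa}. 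On the right-hand side I would estimate $h(t)\scal{p(t)}{\pdt u(t)}$ and $\scal{F(u_1)-F(u_2)}{\pdt u(t)}$ by Young's inequality, absorbing a factor $\half\vnorma{\pdt u(t)}^2$ into the left-hand side; here $\vnorma{p(t)}$ is bounded uniformly in $t$ because $p\in\cIX{\lp{2}}$, and the bound on $\abs{h(t)}$ together with $\vnorma{F(u_1)-F(u_2)} \le L_F\vnorma{u}$ turns both contributions into constant multiples of $\vnorma{u(t)}^2$.

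Integrating in time from $0$ to $t$ and using $u(0)=0$ (so that the temporal boundary term $\half\scal{\kappa(0)\nabla u(0)}{\nabla u(0)}$ vanishes), then invoking the coercivity $\scal{\kappa(t)\nabla u(t)}{\nabla u(t)}\ge\ubar{\kappa}_0\vnorma{\nabla u(t)}^2$, leaves an inequality of the form
\[
\half\int_0^t\vnorma{\pdt u}^2\ds + \half\ubar{\kappa}_0\vnorma{\nabla u(t)}^2 \le C\int_0^t\Bigl(\vnorma{\nabla u(s)}^2 + \vnorma{u(s)}^2\Bigr)\ds .
\]
To close, I would observe that $u(0)=0$ gives $\vnorma{u(s)}^2 \le s\int_0^s\vnorma{\pdt u}^2\dtau \le T\int_0^s\vnorma{\pdt u}^2\dtau$, so that, writing $A(t) := \half\int_0^t\vnorma{\pdt u}^2\ds + \half\ubar{\kappa}_0\vnorma{\nabla u(t)}^2$, both terms on the right are bounded by a constant times $\int_0^t A(s)\ds$. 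Hence $A(t)\le C^\prime\int_0^t A(s)\ds$ with $A(0)=0$, and Grönwall's inequality forces $A\equiv 0$. This gives $\pdt u = 0$ a.e. and therefore $u\equiv u(0)=0$; substituting back into the formula for $h$ yields $h\equiv 0$, i.e. $u_1=u_2$ and $h_1=h_2$.

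The routine parts are the Young and Lipschitz estimates; the two points requiring care are the justification of the product-rule manipulation of $\scal{\kappa(t)\nabla u(t)}{\nabla u(t)}$ for a time-dependent, merely bounded-derivative coefficient (precisely where \ref{as:DP:kappa} is used, in the regularity class $u\in\cIX{\hk{1}}$ with $\pdt u\in\lpkIX{2}{\hk{1}}$), and the handling of the nonlocal term $h$. The latter is the conceptual crux, but it is disarmed by the opening remark that $h_1-h_2$ retains only the difference of the nonlinear contributions and is thus linearly controlled by $\vnorma{u}$, so no smallness assumption on the data is needed.
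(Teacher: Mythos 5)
Your proposal is correct and follows essentially the same route as the paper: subtract the two weak formulations, observe that the difference $h_1-h_2$ reduces to the difference of the nonlinear contributions and is hence linearly bounded by $\vnorma{u_1-u_2}$, test with $\pdt u$, rewrite the $\kappa$-term via the product rule using \ref{as:DP:kappa}, absorb with Young's inequality, and close with Gr\"onwall. The only cosmetic difference is that the paper converts $\vnorma{u(t)}$ into $\int_0^t\vnorma{\pdt u}$ before the Young estimates (producing double integrals of $\vnorma{\pdt u}^2$), whereas you do this conversion only at the Gr\"onwall step; both yield the same conclusion.
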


\begin{proof}
Let $u_1$ and $u_2$ be two distinct solutions to the problem (\ref{eq:expression_h}-\ref{eq:var_for}) with the same data. 
We subtract the variational formulation \eqref{eq:var_for}  for $\{u_1, h_1\}$ from the one for $\{u_2,h_2\}$. Then, we obtain  for $u:=u_1-u_2$ and $h:=h_1-h_2$  that
\begin{multline}\label{uni:var_for}
\scal{\pdt u(t)}{\varphi} + \scal{\eta(t) \nabla \pdt u(t)}{\nabla \varphi} +  \scal{\kappa(t) \nabla u(t)}{\nabla \varphi} \\
=h(t) \scal{p(t)}{\varphi} + \scal{F(u_1(t))-F(u_2(t))}{\varphi}, \quad \forall \varphi \in \hk{1}.  
\end{multline}
Performing the similar operator for \eqref{eq:expression_h}, we obtain that $h$ is expressed as follows
\begin{equation} \label{uni:expression_h}
h(t)= 
\frac{1}{{\omega}(t)}\int_\Omega \left[F(u_2(t,\X))-F(u_1(t,\X))  \right]\dX.\\
\end{equation}
Employing the Lipschitz continuity of $F$, we obtain that 
\begin{equation}\label{proof:uniq:est_h1}
\abs{h(t)} \le \frac{L_F}{\ubar{\omega}_0}  \vnorma{u(t)}_{\Leb^1(\Omega)}\le C_1\vnorma{u(t)}, \ \ C_1:=\frac{L_F}{\ubar{\omega}_0}\sqrt{\meas{\Omega}}.
\end{equation}
Using $u(t,\cdot)= \int_0^t \pdt u(\eta,\cdot) \di\eta$ as $u(0,\cdot) =0$, we have that 
\begin{equation}\label{proof:uniq:est_h}
\abs{h(t)} \le  C_1 \int_0^t \vnorma{\pdt u(\eta)} \di \eta.
\end{equation}
Now, we take $\varphi=\pdt u(t)\in \hk{1}$ in \eqref{uni:var_for} and  integrate the result over $t\in(0,s)\subset (0,T)$ to get 
\begin{multline*} \label{001:eq:est2}
  \int_0^s \vnorma{\pdt u (t)}^2 \dt +  \D\int_0^s \int_\Omega  \eta |\nabla \pdt u|^2 \dX\dt  + \frac{1}{2}\D\int_0^s \int_\Omega  \kappa \pdt \abs{\nabla u}^2 \dX \dt \\
  =\int_0^s h(t) \scal{p(t)}{\pdt u(t)} \dt + \D\int_0^s \scal{F(u_1(t))-F(u_2(t))}{\pdt u(t)}\dt.
\end{multline*}
The third term on the left-hand side of this equation can be handled by using 
\begin{equation*}
 \int_0^s   \int_\Omega  \kappa\pdt \abs{\nabla u}^2 \dX \dt = \int_\Omega  \kappa(s) \abs{\nabla u(s)}^2 \dX -   \int_0^s   \int_\Omega (\pdt \kappa) \abs{\nabla u}^2 \dX \dt. 
\end{equation*}
Next, we focus on the terms on the right-hand side. For the first term, using the $\veps$-Young inequality, we obtain that
\begin{align*}
     \abs{\int_0^s h(t) \scal{p(t)}{\pdt u(t)} \dt } &\le    \veps \int_0^s \vnorma{\pdt u(t)}^2 \dt + \frac{\vnorma{p}_{\mathcal{X}}^2}{4\veps} \int_0^s \abs{h(t)}^2\dt \\
    & \stackrel{\eqref{proof:uniq:est_h}}{\le} \veps\int_0^s \vnorma{\pdt u(t)}^2 \dt+  \frac{\vnorma{p}_{\mathcal{X}}^2 C_1^2 T}{4\veps} \int_0^s \int_0^t \vnorma{\pdt u(\eta)}^2 \di\eta \dt .
   \end{align*}
Similarly, using the Lipschitz continuity of $F$ and $u(t,\cdot)= \int_0^t \pdt u(\eta,\cdot) \di\eta$, we deduce that 
\begin{multline*}
    \abs{\int_0^s \scal{F(u_1(t))-F(u_2(t))}{\pdt u(t)}\dt} \\ 
 %  & \le  \veps_1 \int_0^s \vnorma{\pdt u(t)}^2 \dt+\frac{L_F^2}{4\veps_1} \int_0^s \vnorma{ u(t)}^2 \dt \\
     \le   \veps \int_0^s \vnorma{\pdt u(t)}^2 \dt+\frac{L_F^2T}{4\veps} \int_0^s \int_0^t \vnorma{ \pdt u(\eta)}^2 \di\eta \dt. 
\end{multline*}
Summarising,  using \ref{as:DP:eta}-\ref{as:DP:kappa} and taking $\veps=\frac{1}{4}$, we obtain the estimate 
\begin{multline*}
      \int_0^s \vnorma{\pdt u(t)}^2 \dt + 2\ubar{\eta}_0 \int_0^s \vnorma{\nabla \pdt u(t)}^2 \dt  + \ubar{\kappa}_0\vnorma{\nabla u(s)}^2  \\
      \le  \ubar{\kappa}^\prime_1 \int_0^s \vnorma{\nabla u(t)}^2 \dt+C_2\int_0^s \int_0^t \vnorma{ \pdt u(\eta)}^2 \di\eta \dt,
\end{multline*}
where 
\[
        C_2 := 2 T \left(\vnorma{p}_{\mathcal{X}}^2 C_1^2+L_F^2\right). 
\]
Therefore, applying the Gr\"onwall argument gives that $u=0$ a.e.\ in $Q_T.$ Moreover, from 
\eqref{proof:uniq:est_h1}, we obtain that $h=0$ a.e. in $\Iopen.$
\end{proof}

%%%%%%%%%%%
\section{Existence of a solution}
\label{sec:existence}
%%%%%%%%%%%

In this section, we will show the existence of a weak solution to problem (\ref{eq:expression_h}-\ref{eq:var_for}) by employing Rothe's method. We start by dividing the time interval $[0, T]$  into $n \in \mathbb{N}$ equidistant subintervals $[t_{i-1},t_i]$ of length $\tau = T/n$, $i = 1,\ldots n$. Hence, $t_i = i \tau$ for $i = 0,1, \ldots,n$.  We consider for any function $z$ that
%$$
\[
z_i \approx z(t_i) \quad \text{ and } \quad \pdt z(t_i) \approx \delta z_i = \dfrac {z_i-z_{i-1}}{\tau},
\]
i.e. the backward Euler method is used to approximate the time derivatives at every time step $t_i$. Moreover, linearising the term containing $F$ in the right-hand side of \eqref{eq:var_for} at time step $t_i$ by replacing $u_i$ with $u_{i-1}$, we get the following time-discrete problem at time $t=t_i$: 
\medskip
\begin{center}
Find $u_i \in \hk{1}$ and $h_i\in \RR$ such that 
\begin{multline} \label{eq:disc_inv_prob}
\scal{\delta  u_i}{\varphi} +  \scal{\eta_i \nabla \delta u_i}{\nabla \varphi}  +  \scal{\kappa_i \nabla u_i}{\nabla \varphi} \\
= h_{i}\scal{p_i}{\varphi}+\scal{F(u_{i-1})}{\varphi} +   \scal{\eta_i (\pdt G)_i}{\varphi}_{\partial\Omega} +  \scal{g_i}{\varphi}_{\partial\Omega}, \quad \forall\varphi \in \hk{1}, 
\end{multline}
and
\begin{equation} \label{disc:hi-1}
h_{i}=\frac{1}{\omega_i}\left[(m^\prime)_i - \int_{\partial\Omega} \eta_i(\partial_t G)_i \dg - \int_{\partial\Omega} g_i \dg   - \int_\Omega F(u_{i-1}) \dX\right], 
\end{equation}
where
\begin{equation} \label{initC:disc_inv_prob}
 u_{0}=\tilde{u}_0. 
\end{equation}
\end{center}
\medskip

Hence, for any $i\in \left\{1,...,n\right\}$, we first derive $h_i \in \RR$ from \eqref{disc:hi-1} and we afterwards solve the following problem for $u_i$:
\begin{equation}\label{equiv:var_for_inv_disc_prob}
a_i(u_i,\varphi) = l_i(\varphi), \quad \forall \varphi \in \hk{1}, 
\end{equation}
where the bilinear form  $a_i: \hk{1}\times \hk{1}\to \mathbb{R}$ is given by 
\begin{equation*}
    a_i(u,\varphi) := \frac{1}{\tau}\scal{ u}{\varphi} +  \frac{1}{\tau} \scal{\eta_i \nabla u}{\nabla \varphi} +  \scal{\kappa_i \nabla u}{\nabla \varphi} 
\end{equation*}
and the linear functional $l_i: \hk{1}\to \mathbb{R}$ is defined by
\begin{multline*}
     l_i(\varphi) := h_{i}\scal{p_i}{\varphi}+\scal{F(u_{i-1})}{\varphi} +   \scal{ \eta_i(\pdt G)_i}{\varphi}_{\partial\Omega} +  \scal{g_i}{\varphi}_{\partial\Omega} \\
     + \frac{1}{\tau}\scal{ u_{i-1}}{\varphi} + \frac{1}{\tau} \scal{\eta_i \nabla u_{i-1}}{\nabla \varphi} .
\end{multline*}

The existence and uniqueness of $u_i$ are addressed in the following theorem. 

\begin{theorem}
   Let the conditions \ref{as:DP:eta} until \ref{as:m} be fulfilled.  Then, for any $i=1,\ldots,n$, there exists a unique couple $\{h_i, u_i\}\in \RR \times \hk{1}$ solving (\ref{eq:disc_inv_prob}-\ref{disc:hi-1}). 
\end{theorem}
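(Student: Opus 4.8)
The plan is to solve the time-discrete problem sequentially in $i$, exploiting the fact that at each step the unknowns decouple nicely. First I would observe that $h_i$ is completely determined by the already-known data and the previous iterate $u_{i-1}$: since the right-hand side of \eqref{disc:hi-1} involves only $\omega_i$, the boundary integrals, and $F(u_{i-1})$, and since $\omega_i = \omega(t_i) \neq 0$ by \ref{as:p} (with $\abs{\omega_i} \ge \ubar{\omega}_0 > 0$), the quantity $h_i \in \RR$ exists and is uniquely given by an explicit formula. Here one proceeds by induction: assuming $u_{i-1} \in \hk{1}$ is known (with $u_0 = \tilde{u}_0 \in \hk{1}$ by \ref{as:DP:u0}), the value $h_i$ is well-defined and finite, using $F(u_{i-1}) \in \lp{2}$ which follows from the linear growth bound \eqref{eq:inequality_nonlinear_f} together with $u_{i-1} \in \hk{1} \imbed \lp{2}$.

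Once $h_i$ is fixed, the remaining problem for $u_i$ is the linear variational equation \eqref{equiv:var_for_inv_disc_prob}, and the natural tool is the Lax--Milgram lemma on the Hilbert space $\hk{1}$. The key steps are to verify that $a_i$ is bounded and coercive and that $l_i$ is a bounded linear functional. For boundedness of $a_i$ I would use the upper bounds $\eta(t,\X) \le \ubar{\eta}_1$ and $\kappa(t,\X) \le \ubar{\kappa}_1$ from \ref{as:DP:eta}--\ref{as:DP:kappa} together with Cauchy--Schwarz, giving $\abs{a_i(u,\varphi)} \le C \vnorma{u}_{\hk{1}} \vnorma{\varphi}_{\hk{1}}$. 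For coercivity, the lower bounds $\eta \ge \ubar{\eta}_0 > 0$ and $\kappa \ge \ubar{\kappa}_0 > 0$ yield
\[
a_i(u,u) \ge \frac{1}{\tau}\vnorma{u}^2 + \left(\frac{\ubar{\eta}_0}{\tau} + \ubar{\kappa}_0\right)\vnorma{\nabla u}^2 \ge C(\tau)\vnorma{u}_{\hk{1}}^2,
\]
so the full $\hk{1}$-norm is controlled (the $\frac{1}{\tau}\vnorma{u}^2$ term is precisely what supplies coercivity in the $\lp{2}$-part, which is why the backward Euler discretisation is helpful here).

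For the functional $l_i$, boundedness of the interior terms follows from Cauchy--Schwarz and $\hk{1} \imbed \lp{2}$: the term $h_i \scal{p_i}{\varphi}$ uses $p_i \in \lp{2}$ from \ref{as:p}, the term $\scal{F(u_{i-1})}{\varphi}$ again uses \eqref{eq:inequality_nonlinear_f}, and the two $\frac{1}{\tau}$-terms use $u_{i-1} \in \hk{1}$. The two boundary terms $\scal{\eta_i (\pdt G)_i}{\varphi}_{\partial\Omega}$ and $\scal{g_i}{\varphi}_{\partial\Omega}$ are handled with the trace theorem $\hk{1} \imbed \Leb^2(\partial\Omega)$, using $g \in \Hi^1((0,T],\Leb^2(\partial\Omega))$ and $G = g/\kappa \in \Hi^1((0,T],\Leb^2(\partial\Omega))$ from \ref{as:g} so that the relevant traces at $t = t_i$ lie in $\Leb^2(\partial\Omega)$. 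Applying Lax--Milgram then yields a unique $u_i \in \hk{1}$, completing the induction.

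The main obstacle I expect is bookkeeping rather than a conceptual difficulty: one must be careful that all the pointwise-in-time evaluations at $t = t_i$ appearing in $h_i$ and $l_i$ are well-defined. In particular, quantities such as $(m')_i$, $(\pdt G)_i$, and the trace of $g_i$ arise from functions that are only $\Hi^1$ in time, so strictly speaking these are not defined for \emph{every} $t_i$ but only almost everywhere; the clean way to handle this is to use the Sobolev embedding $\Hi^1((0,T]) \imbed \Cont([0,T])$ (and its vector-valued analogue for the $\Leb^2(\partial\Omega)$-valued functions), which renders the point evaluations meaningful and finite. Once this is settled, the existence and uniqueness of $\{h_i, u_i\}$ for each $i$ follow step by step.
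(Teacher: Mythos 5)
Your proposal is correct and follows essentially the same route as the paper: derive $h_i$ explicitly from \eqref{disc:hi-1} using $\omega_i\neq 0$ and the previous iterate, then apply the Lax--Milgram lemma to the coercive, bounded form $a_i$ with the bounded functional $l_i$, proceeding recursively from $u_0=\tilde u_0\in\hk{1}$. Your additional remarks on the trace theorem and on point evaluations in time via $\Hi^1\imbed\Cont$ are sensible elaborations of details the paper leaves implicit.
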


\begin{proof}
Note that $a$ is bounded on $\hk{1}\times\hk{1}$, and $l_i$ is bounded on $\hk{1}$ if $u_{i-1}\in\hk{1}$. Moreover, the bilinear form $a_i$ is  $\hk{1}$-elliptic as
\[
a_i(u,u) \ge \min\left\{\frac{1}{\tau},\frac{\ubar{\eta}_0}{\tau}+ \ubar{\kappa}_0\right\} \vnorma{u}_{\hk{1}}^2, \quad \forall u\in \hk{1}. 
\]
Hence, starting from $u_0=\tilde{u}_0\in \hk{1}$, we recursively obtain (as the conditions of the Lax-Milgram lemma are satisfied) the existence and uniqueness of $h_i\in\RR$ and $u_i \in \hk{1}$ for $i=1,\ldots,n.$
\end{proof}

Now, we derive the a priori estimates for the discrete solutions to the inverse problem.  

\begin{lemma}\label{inv_prob:_est1}
Let the assumptions \ref{as:DP:eta} until \ref{as:m} be fulfilled. Then, there exists positive constants $C$ and $\tau_0$ such that 
\begin{equation}\label{est:invprob_discrsolu}
\max\limits_{1\le j\le n}\vnorma{u_j}_{\hk{1}}^2
+ \sum_{i=1}^n \vnorma{\delta u_i}_{\hk{1}}^2 \tau +  \sum_{i=1}^n \vnorma{u_i - u_{i-1}}_{\hk{1}}^2+ \sum_{i=1}^n |h_i|^2\tau \le C, 
\end{equation}
for any $\tau < \tau_0.$
\end{lemma}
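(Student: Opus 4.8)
The plan is to establish the a priori estimate \eqref{est:invprob_discrsolu} by testing the discrete variational formulation \eqref{eq:disc_inv_prob} with the natural energy test functions, summing over time steps, and closing the resulting inequality with a discrete Gr\"onwall argument. First I would record a uniform bound on the discrete source coefficients $h_i$. From \eqref{disc:hi-1}, using \ref{as:p} (so that $\abs{\omega_i}\ge \ubar{\omega}_0>0$), the growth bound \eqref{eq:inequality_nonlinear_f} on $F$, the trace inequality to control the boundary integrals of $\eta_i (\pdt G)_i$ and $g_i$, and the regularity assumptions \ref{as:g} and \ref{as:m}, I expect an estimate of the form $\abs{h_i}\le C\bigl(1+\vnorma{u_{i-1}}\bigr)$. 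Summing the squares over $i$ and using $\sum \tau = T$ then reduces the control of $\sum_{i=1}^n \abs{h_i}^2\tau$ to control of $\sum_{i=1}^n \vnorma{u_{i-1}}^2 \tau$, which the main estimate will supply.

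The core step is to choose $\varphi = \delta u_i \,\tau$ in \eqref{eq:disc_inv_prob} and sum from $i=1$ to $j$. The first two terms on the left produce $\sum \vnorma{\delta u_i}^2\tau$ and $\sum \inp{\eta_i\nabla\delta u_i}{\nabla\delta u_i}{}\tau \ge \ubar{\eta}_0 \sum \vnorma{\nabla\delta u_i}^2\tau$, using \ref{as:DP:eta}. For the third term I would apply Abel summation (the discrete analogue of the integration by parts used in the uniqueness proof), writing
\[
\sum_{i=1}^j \inp{\kappa_i\nabla u_i}{\nabla(u_i-u_{i-1})}{} \ge \frac{\ubar{\kappa}_0}{2}\vnorma{\nabla u_j}^2 - C - C\sum_{i=1}^{j}\vnorma{\nabla u_{i-1}}^2\tau,
\]
where the error term arises from $\abs{\pdt\kappa}\le\ubar{\kappa}_1'$ via \ref{as:DP:kappa} together with the identity $2a(a-b)=a^2-b^2+(a-b)^2$ applied with the $\kappa_i$ weight; this is exactly where the second part of \ref{as:DP:kappa} and the term $\sum\vnorma{u_i-u_{i-1}}_{\hk{1}}^2$ enter. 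On the right-hand side, the source term $\sum h_i\inp{p_i}{\delta u_i}{}\tau$ and the nonlinear term $\sum\inp{F(u_{i-1})}{\delta u_i}{}\tau$ are each split with $\veps$-Young so that the $\vnorma{\delta u_i}^2\tau$ pieces are absorbed into the left-hand side, leaving $\frac{1}{4\veps}\sum\abs{h_i}^2\tau$ and a term controlled by $\sum\bigl(\vnorma{F(u_{i-1})}^2\bigr)\tau$; the latter is handled through \eqref{eq:inequality_nonlinear_f} and hence by $\sum\vnorma{u_{i-1}}^2\tau$. The boundary terms are treated with the trace theorem and $\veps$-Young, using \ref{as:g} to bound $\sum\vnorma{(\pdt G)_i}_{\partial\Omega}^2\tau$ and $\sum\vnorma{g_i}_{\partial\Omega}^2\tau$ uniformly.

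After inserting the bound on $\sum\abs{h_i}^2\tau$ from the first step and absorbing it (this is where the smallness-free closure differs from the linear source case: $\abs{h_i}^2$ is controlled by $\vnorma{u_{i-1}}^2$, which itself appears under a time sum and is therefore Gr\"onwall-admissible), the accumulated inequality reads schematically
\[
\vnorma{\nabla u_j}^2 + \sum_{i=1}^j\vnorma{\delta u_i}_{\hk{1}}^2\tau + \sum_{i=1}^j\vnorma{u_i-u_{i-1}}_{\hk{1}}^2 \le C + C\sum_{i=1}^{j}\vnorma{u_i}_{\hk{1}}^2\tau .
\]
To convert control of $\vnorma{\nabla u_j}$ into control of the full $\hk{1}$-norm, I would separately bound $\vnorma{u_j}^2$ by writing $u_j = \tilde u_0 + \sum_{i=1}^j(u_i-u_{i-1})$ and using \ref{as:DP:u0}, or equivalently by estimating $\vnorma{u_j}\le\vnorma{\tilde u_0}+\sum\vnorma{\delta u_i}\tau$ followed by Cauchy--Schwarz in $i$. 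The final step is the discrete Gr\"onwall lemma, valid for $\tau<\tau_0$, which yields the uniform constant $C$ in \eqref{est:invprob_discrsolu}. The main obstacle I anticipate is the bookkeeping in the Abel-summation of the $\kappa$-term: one must extract the positive $\vnorma{\nabla u_j}^2$ boundary contribution while keeping the $\sum\vnorma{u_i-u_{i-1}}_{\hk{1}}^2$ quantity on the good side and routing the $\pdt\kappa$ remainder into the Gr\"onwall sum without creating a term that cannot be absorbed.
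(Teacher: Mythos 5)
Your proposal is correct and follows essentially the same route as the paper: testing with $\varphi=\delta u_i\,\tau$, Abel summation on the $\kappa$-term, the bound $\abs{h_i}\le C(1+\vnorma{u_{i-1}})$ fed back through $u_{i-1}=\tilde u_0+\sum_k\delta u_k\,\tau$, $\veps$-Young absorption with the trace theorem for the boundary terms, and a discrete Gr\"onwall closure. (Only minor quibble: the boundary data terms in the $h_i$ bound need just Cauchy--Schwarz on $\Leb^2(\partial\Omega)$, not the trace theorem, since $G$ and $g$ are already given on $\partial\Omega$.)
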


\begin{proof}
Setting $\varphi = \delta u_i \tau$ in \eqref{eq:disc_inv_prob} and summing up the result for $i=1,\ldots,j$ with $1\le j\le n$ give
\begin{multline}\label{direct_problem:a_priori_estimate:eq1}
\sum_{i=1}^j \vnorma{\delta  u_i}^2 \tau +  \sum_{i=1}^j \scal{\eta_i \nabla\delta u_i}{\nabla  \delta  u_i} \tau  +  \sum_{i=1}^j \scal{\kappa_i \nabla u_i}{\nabla  \delta  u_i} \tau 
= \sum_{i=1}^j h_{i}\scal{p_i}{\delta u_i} \tau  \\
+ \sum_{i=1}^j \scal{F(u_{i-1})}{\delta u_i} \tau 
+ \sum_{i=1}^j  \scal{\eta_i (\pdt G)_i}{\delta u_i}_{\partial\Omega} \tau + \sum_{i=1}^j \scal{g_i}{\delta u_i}_{\partial\Omega} \tau.
\end{multline}
By \ref{as:DP:eta}, we have that
 \[
  \sum_{i=1}^j \scal{\eta_i \nabla\delta u_i}{\nabla  \delta  u_i} \tau \geq \ubar{\eta}_0 \sum_{i=1}^j \vnorma{ \nabla\delta u_i}^2 \tau.
 \]
About the left-hand side of \eqref{direct_problem:a_priori_estimate:eq1}, we note that
 \begin{multline*}
  \sum_{i=1}^j  \scal{\kappa_i \nabla u_i}{\nabla \delta u_i} \tau 
    = \half \scal{\kappa_j \nabla u_j}{\nabla u_j}
    - \half \scal{\kappa_0 \nabla \tilde{u}_0}{\nabla  \tilde{u}_0}\\
   - \half \sum_{i=1}^j \scal{(\delta\kappa_i) \nabla u_{i-1}}{\nabla u_{i-1}} \tau 
    + \half \sum_{i=1}^j \scal{\kappa_i (\nabla u_i -  \nabla u_{i-1})}{\nabla u_i - \nabla u_{i-1}}.
 \end{multline*}
 Hence, using \ref{as:DP:kappa}, we get for $\tau <1$ that
\begin{multline*}
  \sum_{i=1}^j  \scal{\kappa_i \nabla u_i}{\nabla \delta u_i} \tau
  \ge \frac{\ubar{\kappa}_0}{2}  \vnorma{\nabla u_j}^2 - \left( \frac{\ubar{\kappa}_1}{2} +  \frac{\ubar{\kappa}_1^\prime}{2}\right) \vnorma{\nabla \tilde{u}_0}^2 \\
							   - \frac{\ubar{\kappa}_1^\prime}{2}  \sum_{i=1}^{j-1} \vnorma{\nabla u_{i}}^2 \tau  
                        + \frac{\ubar{\kappa}_0}{2} \sum_{i=1}^j \vnorma{\nabla u_i - \nabla u_{i-1}}^2.  
\end{multline*}
Now, we will estimate the terms on the right-hand side of \eqref{direct_problem:a_priori_estimate:eq1}. 
Before doing this, using \eqref{eq:inequality_nonlinear_f}, we estimate $h_i$ given by \eqref{disc:hi-1} as follows
\[
\abs{h_i} \le H_i + \frac{{L}_F }{\ubar{\omega}_0}  \vnorma{u_{i-1}}_{\Leb^1(\Omega)}\le  H_i + C_1\vnorma{u_{i-1}}, \ \ C_1:=\frac{L_F\sqrt{\meas{\Omega}}}{\ubar{\omega}_0},
\]
with
\[
H_i := \frac{1}{\ubar{\omega}_0} \left[ \abs{ (m^\prime)_i } + \ubar{\eta}_1 \sqrt{\meas{\partial\Omega}} \vnorma{ (\partial_t G)_i}_{\partial\Omega} + \sqrt{\meas{\partial\Omega}} \vnorma{ g_i}_{\partial\Omega} + \abs{F(0)} \meas{\Omega}  \right].
\]
Please note that 
\begin{multline*}
\sum_{i=1}^j H_i^2 \tau\\ \le C_2:= C\left(\vnorma{m^\prime}_{\Leb^2 \Iopen},\vnorma{\pdt G}_{\lpkIX{2}{\Leb^2(\partial\Omega)}}, \vnorma{g}_{\lpkIX{2}{\Leb^2(\partial\Omega)}}, \meas{\overline{\Omega}},T, F(0), \ubar{\omega}_0, \ubar{\eta}_1 \right).    
\end{multline*}
Hence, using $u_{i-1} = \sum_{k=1}^{i-1} \delta u_k \tau+ \tilde{u}_0$ for $i\ge 1,$ we have that
\begin{equation} \label{eq:expression_ui}
\sum_{i=1}^j \vnorma{u_{i-1}}^2 \tau \le 2 T \vnorma{\tilde{u}_0}^2  + 2 T \sum_{i=1}^j \left(\sum_{k=1}^{i-1} \vnorma{\delta u_{k}}^2 \tau \right)\tau,
\end{equation}
and so
\begin{equation}\label{eq:discrete_estimate_hi}
\sum_{i=1}^j \abs{h_i}^2 \tau \le 2 C_2 + 2 C_1^2 \sum_{i=1}^j \vnorma{u_{i-1}}^2 \tau \le C_3 + 4 C_1^2 T  \sum_{i=1}^j \left(\sum_{k=1}^{i-1} \vnorma{\delta u_{k}}^2 \tau  \right) \tau,
\end{equation}
with $C_3:= 2 C_2 + 4 C_1^2 T \vnorma{\tilde{u}_0}^2.$ Therefore, employing the $\veps$-Young inequality, we obtain 
\[
\left|\sum_{i=1}^j h_{i} \scal{p_i}{\delta  u_i}\tau\right|
     \le \veps_1 \sum_{i=1}^j  \vnorma{\delta u_i}^2 \tau + \frac{\vnorma{p}_{{\mathcal X}}^2 }{4\veps_1} \left(C_3 + 4 C_1^2 T  \sum_{i=1}^j \left(\sum_{k=1}^{i-1} \vnorma{\delta u_{k}}^2 \tau  \right) \tau\right).
  % \label{est:h}
\]
For the second term on the right-hand side of \eqref{direct_problem:a_priori_estimate:eq1}, we use \eqref{eq:inequality_nonlinear_f} and \eqref{eq:expression_ui} to get
\begin{align*}
 \left| \sum_{i=1}^j \scal{F(u_{i-1})}{\delta u_i} \tau \right| &\le \veps_1 \sum_{i=1}^j  \vnorma{\delta u_i}^2 \tau + \frac{\ubar{L}_F}{4\veps_1} \sum_{i=1}^j \left(\vnorma{u_{i-1}}^2+1\right) \tau \\
 &\le C_4(\veps_1) +  \veps_1 \sum_{i=1}^j  \vnorma{\delta u_i}^2 \tau  +  \frac{\ubar{L}_FT}{2\veps_1} \sum_{i=1}^j \left(\sum_{k=1}^{i-1} \vnorma{\delta u_{k}}^2 \tau  \right) \tau, 
\end{align*}
with $\ubar{L}_F:= 2 \max \left\{L_F^2, F(0)^2 \abs{\Omega} \right\}$ and $C_4(\veps):=\frac{\ubar{L}_FT}{4\veps} \left(1+ 2\vnorma{\tilde{u}_0}^2 \right)$.
Next, using the trace theorem ($\vnorma{\phi}_{\Leb^2(\Gamma)} \le C_{\textrm{tr}} \vnorma{\phi}_{\hk{1}}$), we get that 
\begin{align*}
\abs{\sum_{i=1}^j  \scal{ \eta_i(\pdt G)_i}{\delta u_i}_{\partial\Omega} \tau} &\le \frac{\ubar{\eta}_1^2}{4\veps_2} \sum_{i=1}^j \vnorma{(\pdt G)_i}_{\Leb^2(\Gamma)}^2 \tau + \veps_2 \sum_{i=1}^j  \vnorma{\delta u_i}_{\Leb^2(\Gamma)}^2 \tau \\
& \le \frac{\ubar{\eta}_1^2}{4\veps_2} C\left(\vnorma{\pdt G}_{\lpkIX{2}{\Leb^2(\partial \Omega)}}\right) + \veps
_2  C_{\textrm{tr}}^2 \sum_{i=1}^j  \vnorma{\delta u_i}_{\hk{1}}^2 \tau. 
\end{align*}
Similarly, we have that
\[
\abs{\sum_{i=1}^j \scal{g_i}{\delta u_i}_{\partial\Omega} \tau} 
\le \frac{1}{4\veps_2} C\left(\vnorma{g}_{\lpkIX{2}{\Leb^2(\partial \Omega)}}\right) + \veps
_2  C_{\textrm{tr}}^2 \sum_{i=1}^j  \vnorma{\delta u_i}_{\hk{1}}^2 \tau. 
\]
Now, we collect all estimates derived above and obtain from \eqref{direct_problem:a_priori_estimate:eq1} that 
\begin{multline}\label{ener:ineq22}
  \left(1- 2\veps_1 - 2 C_{\textrm{tr}}^2 \veps_2 \right) \sum_{i=1}^j \vnorma{\delta u_i}^2 \tau + \left(\ubar{\eta}_0 - 2 C_{\textrm{tr}}^2 \veps_2\right) \sum_{i=1}^j \vnorma{ \nabla \delta  u_i}^2 \tau \\
    + \frac{\ubar{\kappa}_0}{2}  \vnorma{\nabla u_j}^2 +  \frac{\ubar{\kappa}_0}{2}  \sum_{i=1}^j \vnorma{\nabla u_i - \nabla u_{i-1}}^2 \\ 
    \le C_5 +  \frac{\ubar{\kappa}_1^\prime}{2}  \sum_{i=1}^{j-1} \vnorma{\nabla u_{i}}^2 \tau   + C_6\sum_{i=1}^{j}\left(\sum_{k=1}^{i} \vnorma{\delta u_k }^2\tau\right)\tau,
\end{multline}
where
\begin{align*}
  C_5 &:=\left( \frac{\ubar{\kappa}_1}{2} +  \frac{\ubar{\kappa}_1^\prime}{2}\right) \vnorma{\nabla \tilde{u}_0}^2  + \frac{\vnorma{p}_{{\mathcal X}}^2 }{4\veps_1} C_3 + C_4(\veps_1) + \frac{\ubar{\eta}_1^2}{4\veps_2} C\left(\vnorma{\pdt G}\right) +  \frac{1}{4\veps_2} C\left(\vnorma{g}\right),   \\
  C_6 &:=  \frac{\vnorma{p}_{{\mathcal X}}^2 C_1^2 T}{\veps_1} +  \frac{\ubar{L}_FT}{2\veps_1} .
\end{align*}
First, we take $\veps_1$ and $\veps_2$ small enough such that $1- 2\veps_1 - 2 C_{\textrm{tr}}^2 \veps_2>0$ and $\ubar{\eta}_0 - 2 C_{\textrm{tr}}^2 \veps_2>0.$ Then, we apply the Gr\"onwall lemma to obtain the existence of positive constants $C$ and $\tau_0$ such that 
\[
\sum_{i=1}^j \vnorma{\delta u_i}^2 \tau +\sum_{i=1}^j \vnorma{\nabla \delta u_i}^2 \tau 
    +\vnorma{\nabla u_j}^2 +  \sum_{i=1}^j \vnorma{\nabla u_i - \nabla u_{i-1}}^2 \le C,
\]
for $\tau < \tau_0.$ Moreover, from \eqref{eq:discrete_estimate_hi}, for $\tau < \tau_0$, we obtain the existence of a positive constant $C$ such that 
\[
\sum_{i=1}^j \abs{h_i}^2 \tau \le C. \qedhere
\]
\end{proof}

In the next step, we introduce the so-called Rothe functions: The piecewise linear-in-time function
\begin{equation}
    \label{Rothestepfun:u} 
    U_n:[0,T]\to\lp{2}:t\mapsto\begin{cases} \tilde{u}_0 & t =0,\\
u_{i-1} + (t-t_{i-1})\delta u_i &
     t\in (t_{i-1},t_i],\quad 1\leqslant i\leqslant n, \end{cases}\end{equation}
and the piecewise constant function
\begin{equation}
    \label{Rothefun:u}
\overline U_n:[-\tau,T] \to\lp{2}:t\mapsto\begin{cases} \tilde{u}_0 &  t \in [-\tau,0],\\
 u_i &     t\in (t_{i-1},t_i],\quad 1\leqslant i\leqslant n.\end{cases}
 \end{equation}
Similarly, in connection with the given functions $\eta$, $\kappa$, $g$, $\pdt G$, $\omega$, and $p,$ we define the functions $\overline{\eta}_n$, $\overline{\kappa}_n$, $\overline{g}_n$, $\overline{\pdt G}_n$, $\overline{\omega}_n$ and $\overline{p}_n$, respectively. 
Now, using these Rothe functions,
 we rewrite the discrete variational formulation (\ref{eq:disc_inv_prob}-\ref{disc:hi-1}) as follows (for all $t\in(0,T]$)
\begin{multline}\label{eq:disc_inv_prob:whole_time_frame}
    \scal{ \pdt U_n(t) }{\varphi} + \scal{ \overline{\eta}_n(t) \nabla \pdt U_n(t)}{\nabla \varphi}  +  \scal{\overline{\kappa}_n(t) \nabla \overline U_n(t)}{\nabla \varphi} 
= \overline{h}_n(t)\scal{\overline{p}_n(t)}{\varphi} \\
+\scal{F(\overline{U}_n(t-\tau))}{\varphi} 
+  \scal{\overline{\eta}_n(t)\overline{\pdt G}_n(t)}{\varphi}_{\partial\Omega} +  \scal{\overline{g}_n(t)}{\varphi}_{\partial\Omega}, \quad \forall\varphi \in \hk{1},
\end{multline}
and
\begin{equation} \label{disc:hi-1:whole_time_frame}
\overline{h}_n(t)=\frac{1}{\overline{\omega}_{n}(t)}\left[ {\overline{m^\prime}_n(t)} - \scal{\overline{\eta}_n(t) \overline{\pdt G}_n(t)}{1}_{\partial\Omega}- \scal{ \overline{g}_n(t)} {1}_{\partial\Omega}   - \scal{F(\overline{u}_{n}(t-\tau))}{1}\right].
\end{equation}

Now, we are ready to show the existence of a solution to (\ref{eq:expression_h}-\ref{eq:var_for}).

%%%%%%%%%%%%%%%
\begin{theorem}\label{thm:existence_inverse_problem}
Let the assumptions \ref{as:DP:eta} until \ref{as:m} be fulfilled.  Then, there exists a unique weak solution couple $\{u,h\}$ to (\ref{eq:expression_h}-\ref{eq:var_for}) satisfying 
\begin{equation*}
 u \in \cIX{\hk{1}} \quad \text{with} \quad  \pdt u \in \lpkIX{2}{\hk{1}} \quad \text{and} \quad h \in \Leb^2\Iopen.
\end{equation*}
\end{theorem}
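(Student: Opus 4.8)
The plan is to invoke Rothe's method in the standard way: the uniform a priori estimate \eqref{est:invprob_discrsolu} from \Cref{inv_prob:_est1} yields weak and strong compactness of the Rothe functions, after which I pass to the limit in the discrete formulation (\ref{eq:disc_inv_prob:whole_time_frame}-\ref{disc:hi-1:whole_time_frame}). Since uniqueness of the couple $\{u,h\}$ is already settled by \Cref{thm:uniq_inv_problem}, only existence together with the claimed regularity remains to be shown.

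First, I would translate \eqref{est:invprob_discrsolu} into bounds on the Rothe functions: $\{U_n\}$ and $\{\overline{U}_n\}$ are bounded in $\lpkIX{\infty}{\hk{1}}$, the piecewise constant time derivative $\{\pdt U_n\}$ is bounded in $\lpkIX{2}{\hk{1}}$, and $\{\overline{h}_n\}$ is bounded in $\Leb^2\Iopen$. Hence, along a subsequence (not relabelled) there is a limit $u$ with $\pdt u \in \lpkIX{2}{\hk{1}}$ satisfying $\pdt U_n \wto \pdt u$ in $\lpkIX{2}{\hk{1}}$, $\nabla U_n,\nabla \overline{U}_n \wto \nabla u$ in $\lpkIX{2}{\lp{2}}$, and $\overline{h}_n \wto h$ in $\Leb^2\Iopen$ for some $h$. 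Because $U_n$ is bounded in $\Hi^1(\Iopen,\hk{1})$ and $\hk{1} \imbed\imbed \lp{2}$ compactly, the Aubin--Lions lemma gives $U_n \to u$ strongly in $\cIX{\lp{2}}$. The identity $\overline{U}_n(t)-U_n(t)=(t_i-t)\delta u_i$ on $(t_{i-1},t_i]$, combined with \eqref{est:invprob_discrsolu}, yields $\vnorma{\overline{U}_n-U_n}_{\lpkIX{2}{\lp{2}}}\le C\tau\to 0$; together with the analogous control of the time shift, this gives $\overline{U}_n \to u$ and $\overline{U}_n(\cdot-\tau)\to u$ strongly in $\lpkIX{2}{\lp{2}}$.

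The essential point is that these strong convergences linearise both the nonlinearity and the coupling with $h$. By the Lipschitz continuity \ref{as:DP:f}, $F(\overline{U}_n(\cdot-\tau))\to F(u)$ strongly in $\lpkIX{2}{\lp{2}}$. Using the convergence of the time-discretised data interpolants (with $\overline{\omega}_n\to\omega$ uniformly and bounded away from zero by \ref{as:p}, and $\overline{m^\prime}_n,\overline{g}_n,\overline{\pdt G}_n$ converging in the relevant $\Leb^2$-norms by \ref{as:g} and \ref{as:m}), I would pass to the limit in \eqref{disc:hi-1:whole_time_frame} to identify $h$ as the function given by \eqref{eq:expression_h}, and in fact obtain $\overline{h}_n\to h$ strongly in $\Leb^2\Iopen$, so that $\overline{h}_n\scal{\overline{p}_n}{\varphi}\to h\scal{p}{\varphi}$.

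Finally, I would test \eqref{eq:disc_inv_prob:whole_time_frame} with a fixed $\varphi\in\hk{1}$, integrate over $(0,t)$, and pass to the limit term by term: the terms carrying $\nabla\pdt U_n$ and $\nabla\overline{U}_n$ converge by pairing the weak $\lpkIX{2}{\lp{2}}$-convergence of the gradients with the strong $\lpkIX{2}{\lp{2}}$-convergence (and uniform bounds) of the coefficient interpolants $\overline{\eta}_n\to\eta$ and $\overline{\kappa}_n\to\kappa$, while the source, reaction and boundary terms converge by the strong limits established above and the trace theorem. Differentiating the resulting integral identity in $t$ recovers \eqref{eq:var_for} for a.a.\ $t\in\Iopen$, and the regularity $u\in\cIX{\hk{1}}$ follows from $u\in\Hi^1(\Iopen,\hk{1})\imbed\cIX{\hk{1}}$; uniqueness is inherited from \Cref{thm:uniq_inv_problem}. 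I expect the main obstacle to be the compactness step, namely securing strong convergence of $U_n$ (hence of $\overline{U}_n(\cdot-\tau)$) so that the nonlinear term $F$ and the $u$-dependence of $h$ both pass to the limit, while simultaneously controlling the products of weakly convergent gradients against the merely bounded, time-discretised coefficients.
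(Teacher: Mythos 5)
Your proposal follows essentially the same route as the paper's proof: the a priori estimates of \Cref{inv_prob:_est1} give weak compactness of the Rothe functions and $\overline{h}_n$, the compact embedding $\hk{1}\imbed\imbed\lp{2}$ (the paper cites \cite[Lemma~1.3.13]{Kacur1985} where you invoke Aubin--Lions, an interchangeable tool here) yields the strong convergences needed to pass the Lipschitz nonlinearity and the formula for $h$ to the limit, and the weak formulation is recovered by integrating the discrete identity over $(0,\eta)$ and differentiating, with uniqueness inherited from \Cref{thm:uniq_inv_problem}. The only cosmetic difference is that you upgrade $\overline{h}_n\wto h$ to strong $\Leb^2$-convergence, which is true but not needed; the paper identifies the weak limit $\sigma=h$ via the explicit expression \eqref{eq:expression_h}.
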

%%%%%%%%%%%%%%%

\begin{proof}
We have from \Cref{inv_prob:_est1} that there exist $C>0$ and $n_0\in \NN$ such that for all $n\geqslant n_0 > 0$ it holds that 
\begin{multline}\label{est:invprob_discr:solu}
\sup\limits_{t\in[0,T]} \vnorma{\overline{U}_{n}(t)}_{\hk{1}}^2 
+ \int\limits_0^T \vnorma{\pdt{U}_{n}(t)}_{\hk{1}}^2  \dt \\
+ \D \sum_{i=1}^n \vnorma{\int_{t_{i-1}}^{t_i} \pdt{U}_{n}(s)\ds}_{\hk{1}}^2+ \vnorma{\overline{h}_{n}}_{\Leb^2(0,T)}^2\le C.
\end{multline}
By \cite[Lemma~1.3.13]{Kacur1985}, the compact embedding  $\hk{1} \imbed \imbed \lp{2} $ (see \cite[Theorem~6.6-3]{Ciarlet2013}) leads to the existence of a function
$u \in \cIX{\lp{2}}  \cap \Leb^{\infty}\left((0,T), \hk{1}\right)$ with $\pdt u \in \lpkIX{2}{\lp{2}}$,
and a subsequence $\{ U_{n_l}\}_{l\in\NN}$ of $\{U_n\}$ such that
\begin{equation} \label{convergence:rothe_functions_dp}
\left\{
\begin{array}{ll}
U_{n_l} \to u & \text{in}~~\Cont\left([0,T], \lp{2}\right) , \\[4pt]
U_{n_l}(t) \rightharpoonup u(t) & \text{in}~~\hk{1},~~\forall t \in [0,T], \\[4pt]
\overline{U}_{n_l}(t) \rightharpoonup u(t) & \text{in}~~\hk{1},~~\forall t \in [0,T],  \\[4pt]
\pdt {U}_{n_l} \rightharpoonup \pdt u & \text{in}~~\Leb^{2}\left((0,T), \lp{2}\right).
\end{array}
\right.
\end{equation}
By the reflexivity of the space $\lpkIX{2}{\hk{1}}$, we have
%the existence of a subsequence of ${U}_{n_l}$ (denoted by the same symbol)  such that 
that
\begin{equation}\label{weak_convergence_time_der}
\pdt {U}_{n_l} \rightharpoonup \pdt u \quad  \text{in} \quad \lpkIX{2}{\hk{1}}, 
\end{equation}
i.e. $u\in\cIX{\hk{1}}$. Similarly, we have that 
\begin{equation}\label{weak_convergence_hn}
\overline{h}_{n_l} \rightharpoonup \sigma \quad \text{in} \quad \Leb^2\Iopen.
\end{equation}
Moreover, from \Cref{inv_prob:_est1}, we also have that (note that $\tau_l = T/n_l$)
\begin{equation}\label{dir:str:conv:L2}
\int_0^{T} \left( \vnorma{\overline{U}_{n_l}(t) - U_{n_l}(t) }^2 + \vnorma{\overline{U}_{n_l}(t-\tau) - U_{n_l}(t) }^2 \right) \dt  \leqslant 2 \tau_l^2 \sum_{i=1}^{n_l} \vnorma{\delta u_i}^2 \tau  \leqslant C \tau_l^2,
\end{equation}
so \begin{equation}\label{dir:str:conv:L22}
\overline U_{n_l},  \overline U_{n_l}(\cdot-\tau) \to u \ \text{in} \ \lpkIX{2}{\lp{2}} \ \text{as} \ l\rightarrow \infty.
\end{equation}
Hence, using these limit transitions and the Lipschitz continuity of $F$, we easily see that 
\[
\int_0^T \overline{h}_{n_l}(t) \phi(t)\dt \rightarrow \int_0^T {h}(t) \phi(t)\dt \quad \text{ for all } \phi \in \Leb^2\Iopen \text { as } l \to \infty, 
\]
where $h$ is given by \eqref{eq:expression_h}. Hence, by the uniqueness of the weak limit, we have that $\sigma=h.$ Next, we integrate \eqref{eq:disc_inv_prob:whole_time_frame} for $n=n_l$ over $t\in(0,\eta)\subset \Iopen$ and pass to the limit $l\to \infty.$ Afterwards, we differentiate with respect to $\eta$ to obtain that \eqref{eq:var_for} is satisfied. The uniqueness of a solution and the convergence of the whole Rothe sequences follow from \Cref{thm:uniq_inv_problem}. 
\end{proof}

\begin{remark}[Direct problem for a semilinear pseudo-parabolic equation with Neumann boundary condition]
For the direct problem \eqref{eq:problem} with given source $f,$ the weak formulation becomes 
\begin{center}
Find $u(t)\in \hk{1}$ with $\pdt u(t)\in \hk{1}$ such that for a.a. $t \in \Iopen$ and any $\varphi \in \hk{1}$ it holds that 
\begin{multline}\label{eq:var_for_dp} 
\scal{ \pdt u(t)}{\varphi} + \scal{ \eta(t) \nabla \pdt u(t)}{\nabla \varphi} +  \scal{\kappa(t) \nabla u(t)}{\nabla \varphi} \\
=\scal{f(t)}{\varphi} + \scal{F(u(t))}{\varphi} + \scal{\eta(t)  \partial_t G(t)}{\varphi}_{\partial\Omega} + \scal{ g(t)}{\varphi}_{\partial\Omega}.    
\end{multline}
\end{center}
Putting $h=0$ in the approach followed for the inverse problem, we also get the existence of a unique weak solution to \eqref{eq:var_for_dp} satisfying 
\[
 u \in \cIX{\hk{1}} \quad \text{with} \quad  \pdt u \in \lpkIX{2}{\hk{1}},
\]
when the assumptions \ref{as:DP:eta} until \ref{as:g}, and $f\in \Leb^2\left(\I,\lp{2}\right)$ are satisfied.
\end{remark}

%%%%%%%%%%%%%%%%%%%%%%%%%%%%%%%%%%%%%%%%%%%%%%%%%%%%%%%%%%%%
\section{Numerical experiments}
\label{sec:experiments}
%%%%%%%%%%%%%%%%%%%%%%%%%%%%%%%%%%%%%%%%%%%%%%%%%%%%%%%%%%%%

In this section, we illustrate the behaviour of the proposed time-stepping
algorithm for recovering the unknown temporal source $h(t)$ in the inverse
problem (\ref{eq:problem}-\ref{eq:add:cond}).
We first present in \Cref{subsec:noisy_free} a convergence test for noise-free data in one dimension. Afterwards, in \Cref{subsec:1dnoisy,subsec:2dnoisy}, we perform 
noisy experiments in one and two spatial dimensions.
All computations were carried out with the FEniCSx platform
\cite{FEniCSx1,FEniCSx2,FEniCSx3}, based on version \texttt{0.1} of the
DOLFINx module.

The algorithm described in \Cref{algorithm} requires the time derivative $m^\prime(t)$ of the measurement. More specific, the computation proceeds by first
deriving $h_i$ from \eqref{disc:hi-1} requiring $(m^\prime)_i$ and $u_{i-1}$, and then solving the variational problem \eqref{equiv:var_for_inv_disc_prob}
for $u_i$. In the presence of noise, we stabilise this step by fitting the
noisy data $m^\epsilon(t)$ with a low-degree (denoted by $\tilde{d}$) polynomial $p^{\epsilon}_{\tilde{d}}(t)$, and differentiating 
the resulting polynomial analytically.
This avoids numerical differentiation of noisy data.

Let us first explain how we have constructed the approximating polynomials $p^{\epsilon}_{\tilde{d}}(t)$.  From $m(t)$, the noisy data $m^{\epsilon}(t)$ are generated by 
\[
m^{\epsilon}(t) = m(t) \left(1 + \epsilon {\mathcal R}(t)\right),
\qquad t \in \left\{ \frac{jT}{\widetilde{N}} : j=0,\ldots,\widetilde{N}\right\},
\]
where $\epsilon$ denotes the relative noise level (e.g.\ $\epsilon=0.01$
corresponds to $1\%$ noise), and $\mathcal{R}(t)$ is an independent random
variable uniformly distributed in $[-1,1]$. To ensure reproducibility, the random sequence is generated with a fixed seed
$\texttt{np.random.seed}(\texttt{exp}-1)$ for each experiment (\texttt{exp} is the number of the experiment). We have considered $\widetilde{N}=100$ in our experiments. Afterwards, for each noise level $\epsilon = \{0.001,0.005,0.01,0.03,0.05\}$, we have approximated (so regularised) $m^\epsilon(t)$ by a
polynomial $p^{\epsilon}_{\tilde{d}}(t)$ of degree $\tilde{d}$ using a linear least-squares fit (employing \texttt{np.polyfit}). The fitting degree of the polynomial has been selected on the relative improvement in discrete $\ell^2$ error
\begin{equation}\label{eq:definition_improvememt}
r_{\textup{im}}(\tilde{d}) = 100 \times \frac{\|p^\epsilon_{\tilde{d}-1} - m^\epsilon\|_{\ell^2} - \|p^\epsilon_{\tilde{d}} - m^\epsilon\|_{\ell^2}}{\|p^\epsilon_{\tilde{d}-1} - m^\epsilon\|_{\ell^2}} \%,
\end{equation}
where $\|f\|_{\ell^2} = \sqrt{\frac{1}{\widetilde{N}+1}\sum_{i=0}^{\widetilde{N}} \abs{f(t_i)}^2}$.
If $\epsilon \neq 0$, then the approximate value for $m^\prime(t_i)$ used in the reconstruction \eqref{disc:hi-1} has been  obtained by analytic
differentiation of this polynomial, i.e., $m^\prime(t_i) \approx  {(p^{\epsilon}_{\tilde{d}})}^{\prime}(t_i)$ for $i=1,\ldots,n$.

\begin{algorithm}[htbp]
\caption{Time-stepping algorithm for the discrete inverse problem}
\label{algorithm}
\begin{algorithmic}[1]
\Require initial field $u_0=\tilde u_0$, time step $\tau$, number of finite elements $n$
\For{$i = 1$ to $n$}
  \State {\bf compute } $h_i$ explicitly from \eqref{disc:hi-1} using $u_{i-1}$, and $m^\prime(t_i)$ if $\epsilon = 0$ or   $m^\prime(t_i) \approx  {(p^{\epsilon}_{\tilde{d}})}^{\prime}(t_i) $ if $\epsilon \neq 0$.
  \State {\bf assemble} the linear functional $l_i(\cdot)$ --see \eqref{equiv:var_for_inv_disc_prob}-- with $h_i$ and $u_{i-1}$.
  \State {\bf assemble} the bilinear form $a_i(\cdot,\cdot)$, see \eqref{equiv:var_for_inv_disc_prob}.
  \State {\bf finite element discretisation:} use P1--FEM on a mesh of $n$ elements to build the system matrix $A_i$ and right-hand side vector $b_i$:
    \[
      (A_i)_{jk} = a_i(\phi_k,\phi_j), \qquad (b_i)_j = l_i(\phi_j),
    \]
    where $\{\phi_j\}$ is the P1 basis.
  \State {\bf solve} the linear system $A_i U_i = b_i$ for $U_i$, and set $u_i := \sum_j (U_i)_j \phi_j$.
  \State {\bf store} $u_i, h_i$.
\EndFor
\Ensure discrete solution $\{u_i\}_{i=0}^n$ and scalars $\{h_i\}_{i=1}^n$.
\end{algorithmic}
\end{algorithm}

For all simulations, we take
\[
\eta=0.5,\qquad \kappa=t+1. 
\]
The corresponding discrete elliptic problems are solved numerically by applying the finite element method using the first-order (P1--FEM) Lagrange polynomials for the space discretisation. In 1D, the spatial interval $\Omega = (0,1)$ is partitioned into $200$ uniform elements. In 2D, we use a structured triangular mesh of size $40\times 40$ on the unit square $\Omega=(0,1)^2$. In all experiments, the final time is $T=1$ with $n=200$ time steps, i.e., $\tau = T/n = 0.005$.

%%%%%%%%%%%%%%%%%%%%%%%%%%%%%%%%%%%%%%%%%%%%%%%%%%%%%%%%%%%%%%
\subsection{Noise-free convergence test in 1D}
\label{subsec:noisy_free}
%%%%%%%%%%%%%%%

We begin with a one-dimensional convergence study ($\Omega = (0,1)$) using an exact measurement.  
The exact solution is chosen as
\[
  u_1(t,x) = \exp(-t) \sin(\pi x),
  \qquad
  h_1(t) = \exp(-t),
\]
for which
\[
  m_1(t) = \int_0^1 u(t,x)\,dx = \frac{2}{\pi}\exp(-t),
\]
and
\[
 p_1(x) = -\frac{\pi^2}{2}\sin(\pi x), \quad  F_1(u) = -u,
  \quad
  f_1(t,x) = \pi^2(t+1)\exp(-t)\sin(\pi x).
\]
We examine the order of convergence of $u$ and $h$ by evaluating the errors
\[
E_{\mathrm{max}}^{u}(\tau) =\max_{1\le i \le n}  \vnorma{u(t_i) - u_i} \quad \text{ and } \quad E_{\mathrm{max}}^{h}(\tau) = \max_{1\le i \le n} \abs{h(t_i) - h_i }.
\]
The computed results give the natural convergence rates $E_{\mathrm{max}}^{u_1}(\tau)=\OO{\tau}$ and $E_{\mathrm{max}}^{h_1}(\tau)= \OO{\tau},$ see \Cref{Experiment1conv}. 

%%%%%%%%%%%
\begin{figure}[htbp]
\begin{center}
\subfigure[]{\includegraphics[width=0.75\textwidth,angle=0,height = 0.22\textheight]{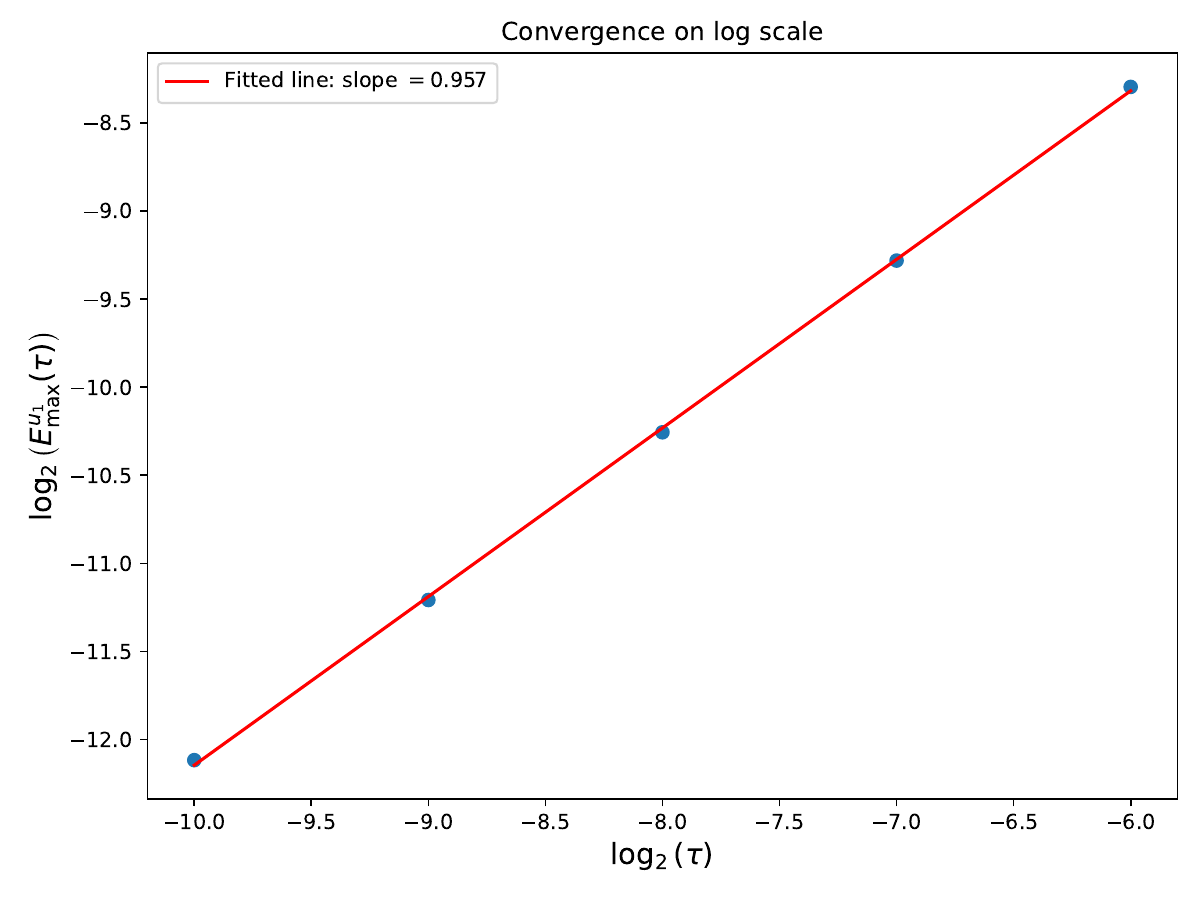}}
\subfigure[]{\includegraphics[width=0.75\textwidth,angle=0,height = 0.22\textheight]{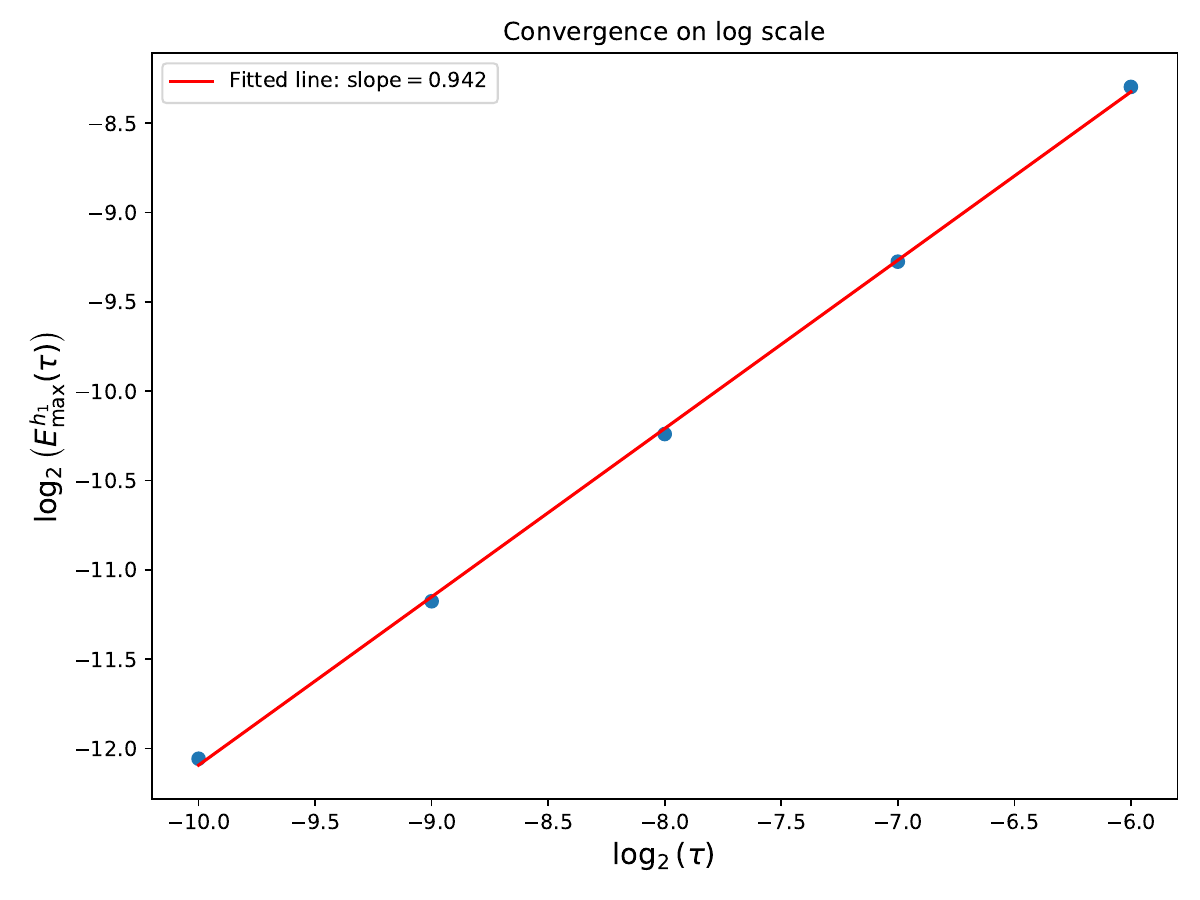}}
\end{center}
\vspace{-0.5cm}
\caption{Experiment 1 (1D): (a) rate of convergence of $u_1$; (b) rate of convergence for $h_1$, for noise-free data $m_1$. 
}
\label{Experiment1conv}
\end{figure}

%%%%%%%%%%%%%%%%%%%%%%%%%%%%%%%%%%%%%%%%%%%%%%%%%%%%%%%%%%%%%%%
\subsection{One-dimensional experiments with noisy data}
\label{subsec:1dnoisy}
%%%%%%%%%%%%%%%

In this section, we consider noisy experiments in one dimension ($\Omega = (0,1)$). In the first experiment, we use the same solution $\{u_1,h_1\}$ as in the convergence test. 
 We select the polynomial approximation degree for the measurement data $m_1^\epsilon(t)$ on basis of the relative improvement $r_{\text{im}}(\tilde{d})$ defined in \eqref{eq:definition_improvememt}. As shown in \Cref{tab:degree_selection_1D_1}, for $\epsilon \leq 0.01$, polynomials of degree~3 provide substantial improvements over lower degrees while higher degrees yield only small improvements. For higher noise levels ($\epsilon \geq 0.03$), the improvements beyond degree~2 become negligible. Based on this analysis, we have selected cubic polynomials  (i.e., $p^{\epsilon}_{3}(t)$) for this experiment to ensure adequate approximation quality across all different noise levels.  The polynomial fits for Experiment~1 with $\epsilon = 0.005$ are visualised in \Cref{fig:polynomial_analysis_exp1}, demonstrating how higher-degree polynomials (for $\tilde{d}\le 3$) better approximate the measurement data.  The results of the first experiment are depicted in \Cref{Experiment1,Experiment1u}. 

\begin{table}[htbp]
\centering
\caption{Polynomial approximation errors $\mathcal{E}(\tilde{d}) = \|p^\epsilon_{\tilde{d}} - m_1^\epsilon\|_{\ell^2}$ and relative improvements $r_{\text{im}}(\tilde{d}) = \frac{\mathcal{E}(\tilde{d}-1) - \mathcal{E}(\tilde{d})}{\mathcal{E}(\tilde{d}-1)}$ for Experiment 1 (1D case).}
\label{tab:degree_selection_1D_1}
\begin{tabular}{lcccccc}
\hline
$\tilde{d}$ &  & \multicolumn{5}{c}{Noise level $\epsilon$} \\
\cline{3-7}
& & 0.001 & 0.005 & 0.010 & 0.030 & 0.050 \\
\hline
1 & $\mathcal{E}(1)$ & 1.48e-2 & 1.49e-2 & 1.52e-2 & 1.71e-2 & 2.00e-2 \\
\hline
2 & $\mathcal{E}(2)$ & 1.26e-3 & 1.67e-3 & 2.57e-3 & 6.92e-3 & 1.14e-2 \\
  & $r_{\text{im}}(2)$ & +91.5\% & +88.8\% & +83.1\% & +59.6\% & +42.7\% \\
\hline
3 & $\mathcal{E}(3)$ & 2.36e-4 & 1.14e-3 & 2.28e-3 & 6.83e-3 & 1.14e-2 \\
  & $r_{\text{im}}(3)$ & +81.2\% & +31.7\% & +11.5\% & +1.3\% & +0.4\% \\
\hline
4 & $\mathcal{E}(4)$ & 2.28e-4 & 1.14e-3 & 2.28e-3 & 6.82e-3 & 1.14e-2 \\
  & $r_{\text{im}}(4)$ & +3.7\% & +0.0\% & +0.0\% & +0.1\% & +0.1\% \\
\hline
5 & $\mathcal{E}(5)$ & 2.27e-4 & 1.14e-3 & 2.27e-3 & 6.82e-3 & 1.14e-2 \\
  & $r_{\text{im}}(5)$ & +0.1\% & +0.0\% & +0.0\% & +0.0\% & +0.0\% \\
\hline
\end{tabular}
\end{table}
%%%%

\begin{figure}[htbp]
\begin{center}
\subfigure[]{\includegraphics[width=0.45\textwidth,angle=0,height = 0.22\textheight]{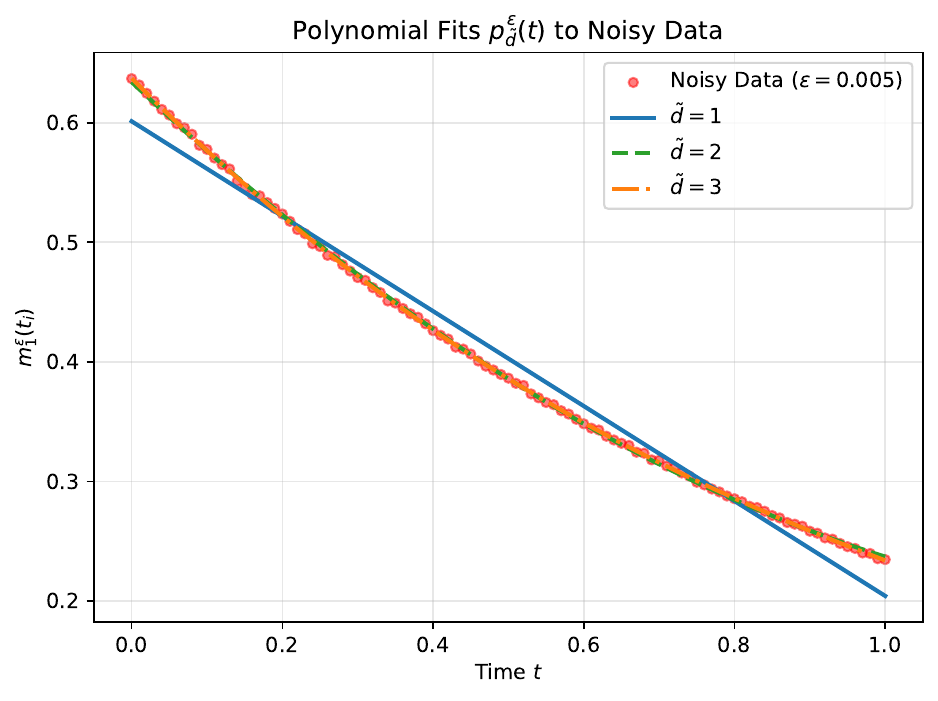}}
\subfigure[]{\includegraphics[width=0.45\textwidth,angle=0,height = 0.22\textheight]{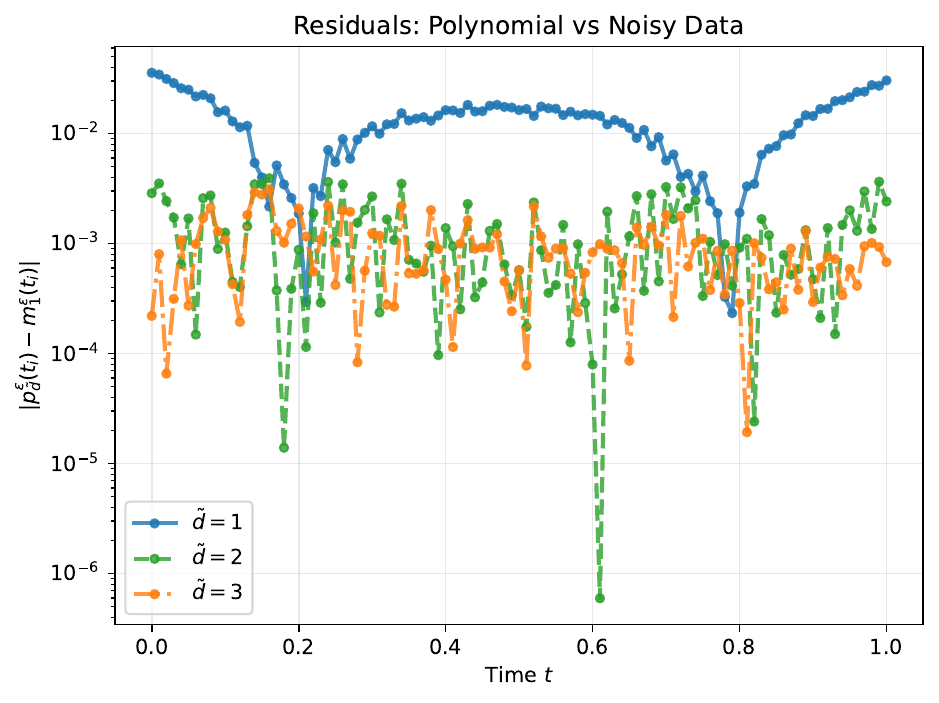}}
\end{center}
\vspace{-0.5cm}
\caption[Polynomial approximation analysis for Experiment 1]{Experiment 1 (1D): (a)~Polynomial approximations of noisy measurement data $m_1^\epsilon(t)$ for $\epsilon = 0.005$, and (b)~corresponding absolute errors.}
\label{fig:polynomial_analysis_exp1}
\end{figure}

%%%%%%%%%%%
\begin{figure}[htbp]
\begin{center}
\subfigure[]{\includegraphics[width=0.75\textwidth,angle=0,height = 0.22\textheight]{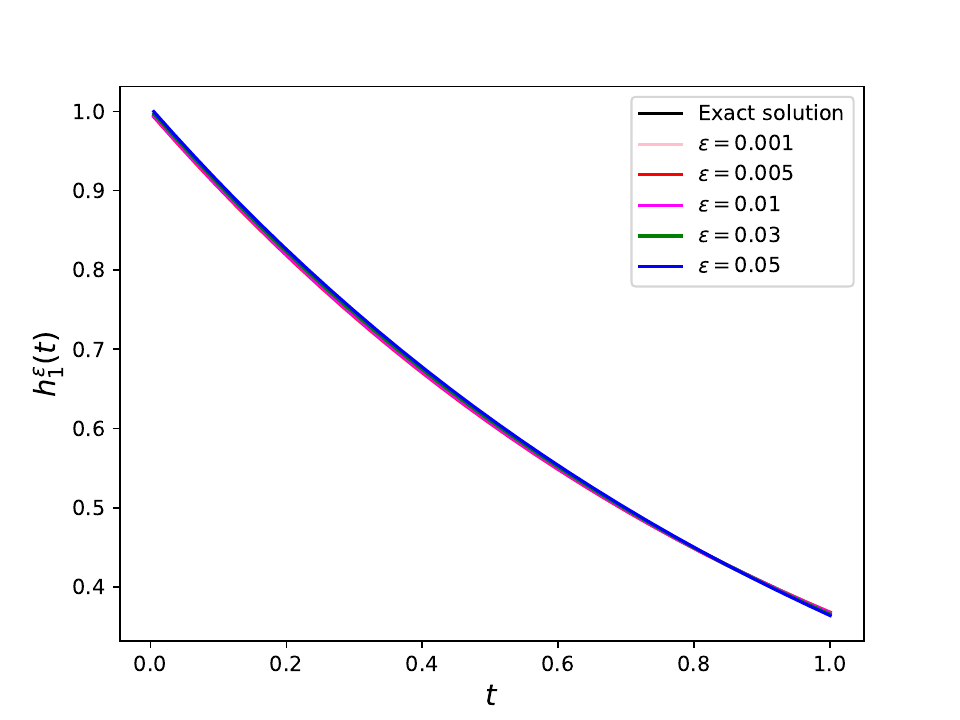}}
\subfigure[]{\includegraphics[width=0.75\textwidth,angle=0,height = 0.22\textheight]{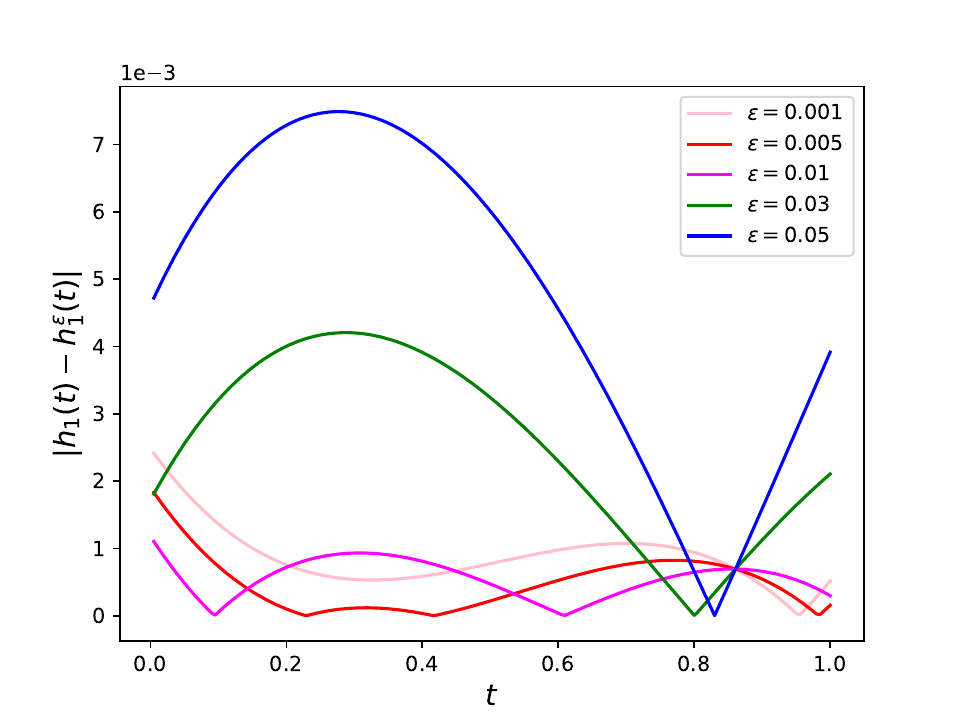}}
\end{center}
\vspace{-0.5cm}
\caption[Unknown time source (1D): Results for Experiment 1]{Experiment 1: (a)~The exact source and its numerical approximation, and (b)~its corresponding absolute error, obtained for various levels of noise.
}
\label{Experiment1}
\end{figure}
%%%%%%%%%%%

%%%%%%%%%%%
\begin{figure}[htbp]
\begin{center}
\subfigure[]{\includegraphics[width=0.75\textwidth,angle=0,height = 0.22\textheight]{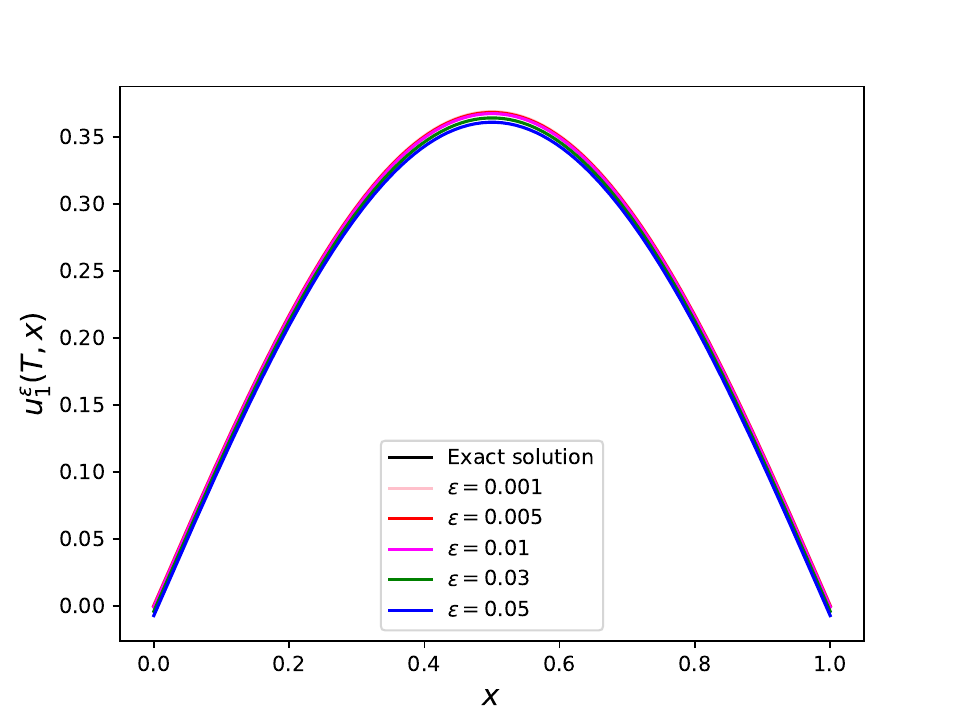}}
\subfigure[]{\includegraphics[width=0.75\textwidth,angle=0,height = 0.22\textheight]{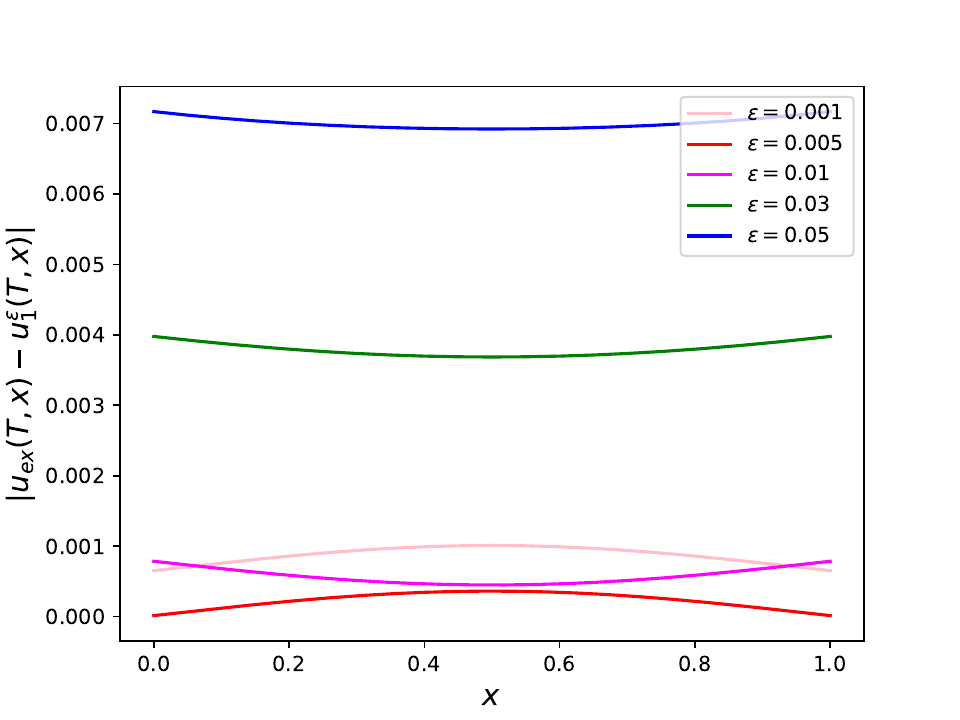}}
\end{center}
\vspace{-0.5cm}
\caption{Experiment 1 (1D): (a)~The exact solution at final time $T=1$ and its numerical approximation, and (b)~its corresponding absolute error, obtained for various levels of noise.}
\label{Experiment1u}
\end{figure}

In the second experiment, we consider the (oscillatory) exact solution
\[
  u_2(t,x) = \cos(2\pi t)\sin(\pi x),
  \qquad
  h_2(t) = \sin(2\pi t),
\]
with
\[
  p_2(x)=-(2\pi+\pi^3)\sin(\pi x), \quad F_2(u)=\pi^2 u,
  \quad
  f_2(t,x)= \pi^2 t\sin(\pi x)\cos(2\pi t),
\]
and
\[
  m_2(t) = \frac{2}{\pi}\cos(2\pi t).
\]
Since the exact measurement $m_2$ is an even function, 
we restrict our approximation to even-degree polynomials.
Based on the improvement analysis in \Cref{tab:degree_selection_1D_2_even}, we select polynomials of degree~6 (i.e., $p^{\epsilon}_{6}(t)$) for Experiment~2. This degree provides the most significant error reduction (up to 95\% improvement from degree~4) while higher degrees provide small improvements (except for $\epsilon= 0.001$). Note that the selected polynomials of degree~6 achieve small $\ell^2$ errors for all noise levels, which we illustrate for $\epsilon = 0.03$ in \Cref{fig:polynomial_analysis_exp2}. The results for Experiment~2 are given in \Cref{Experiment2,Experiment2u}.  It can be seen from the figures that for both one-dimensional experiments accurate approximations 
of the exact sources and solution at final time have been obtained for the different noise levels $\epsilon = \{0.001,0.005,0.01,0.03,0.05\}$. 

\begin{table}[htbp]
\centering
\caption{Polynomial approximation errors $\mathcal{E}(\tilde{d}) = \|p_{\tilde{d}} - m_2^\epsilon\|_{\ell^2}$ and relative improvements $r_{\text{im}}(\tilde{d}) = \frac{\mathcal{E}(\tilde{d}-2) - \mathcal{E}(\tilde{d})}{\mathcal{E}(\tilde{d}-2)}$ for Experiment 2 (1D case). Only even degrees are considered, given the pattern of $m_2.$}
\label{tab:degree_selection_1D_2_even}
\begin{tabular}{lcccccc}
\hline
& & \multicolumn{5}{c}{Noise level $\epsilon$} \\
\cline{3-7}
$\tilde{d}$ & & 0.001 & 0.005 & 0.010 & 0.030 & 0.050 \\
\hline
2 & $\mathcal{E}_2$ & 1.247e-1 & 1.247e-1 & 1.248e-1 & 1.251e-1 & 1.257e-1 \\
%  & $r_2$ & +72.3\% & +72.3\% & +72.3\% & +72.1\% & +71.9\% \\
\hline
4 & $\mathcal{E}_4$ & 1.182e-2 & 1.177e-2 & 1.182e-2 & 1.323e-2 & 1.610e-2 \\
  & $r_{\text{im}}(4)$ & +90.6\% & +90.5\% & +90.5\% & +89.4\% & +87.2\% \\
\hline
6 & $\mathcal{E}_6$ & 5.661e-4 & 1.224e-3 & 2.385e-3 & 7.254e-3 & 1.216e-2 \\
  & $r_{\text{im}}(6)$ & +95.2\% & +89.6\% & +79.8\% & +45.2\% & +24.5\% \\
\hline
8 & $\mathcal{E}_8$ & 2.344e-4 & 1.171e-3 & 2.341e-3 & 7.025e-3 & 1.171e-2 \\
  & $r_{\text{im}}(8)$ & +58.6\% & +4.4\% & +1.8\% & +3.2\% & +3.7\% \\
\hline
10 & $\mathcal{E}_{10}$ & 2.341e-4 & 1.171e-3 & 2.341e-3 & 7.024e-3 & 1.171e-2 \\
  & $r_{\text{im}}(10)$ & +0.1\% & +0.0\% & +0.0\% & +0.0\% & +0.0\% \\
\hline
\end{tabular}
\end{table}

\begin{figure}[htbp]
\begin{center}
\subfigure[]{\includegraphics[width=0.75\textwidth,angle=0,height = 0.22\textheight]{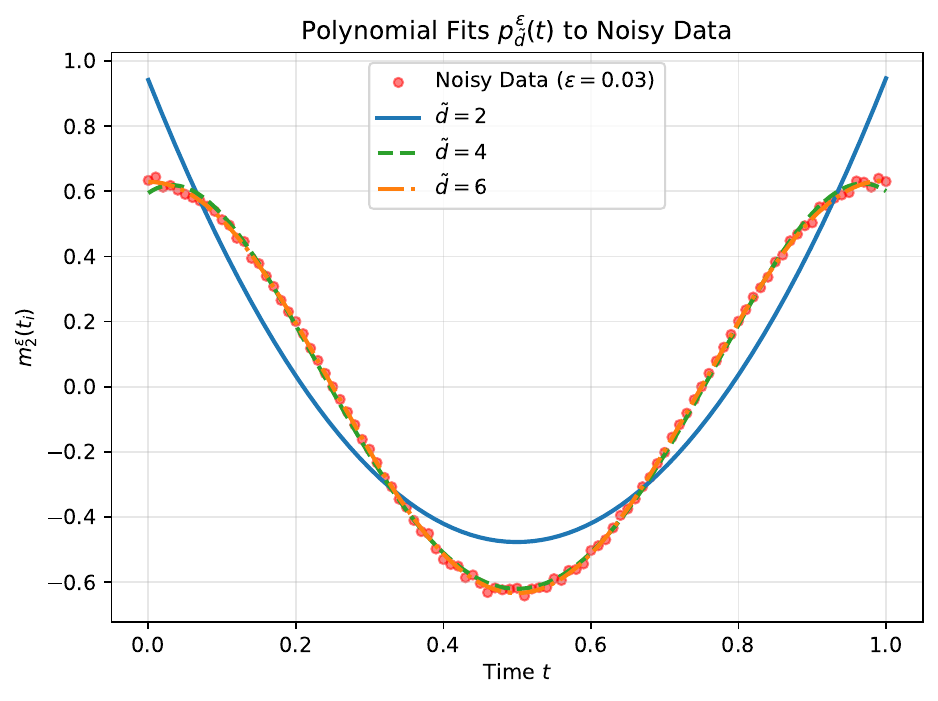}}
\subfigure[]{\includegraphics[width=0.75\textwidth,angle=0,height = 0.22\textheight]{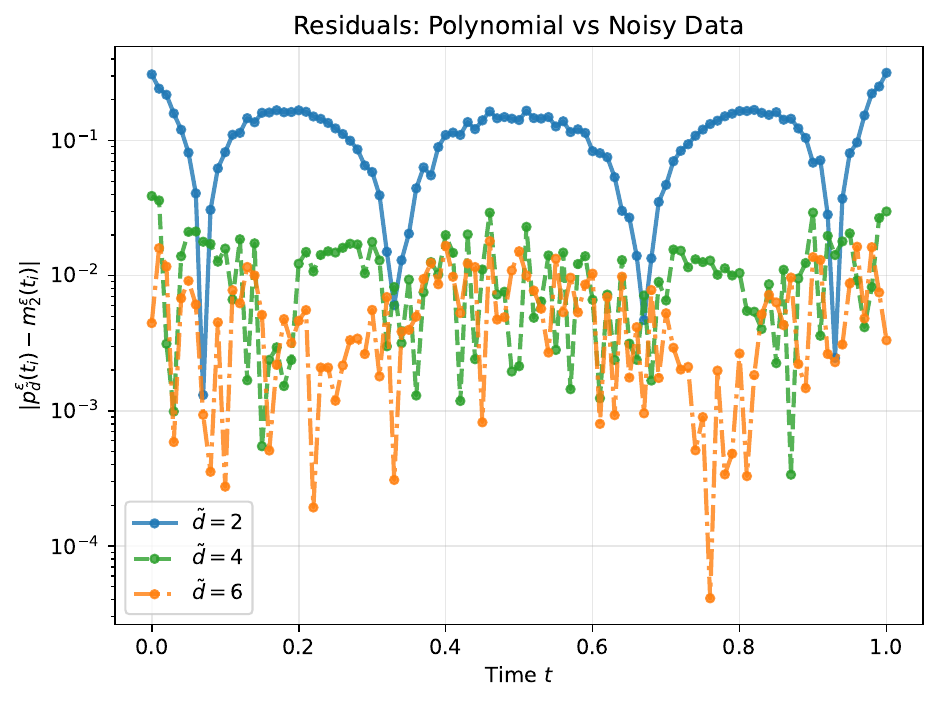}}
\end{center}
\vspace{-0.5cm}
\caption[Polynomial approximation analysis for Experiment 2]{Experiment 2 (1D): (a)~Polynomial approximations of noisy measurement data $m_2^\epsilon(t)$ for $\epsilon = 0.03$, and (b)~corresponding absolute errors. Due to the form of $m_2^\epsilon(t)$, only even degrees are considered. }
\label{fig:polynomial_analysis_exp2}
\end{figure}

%%%%%%%%%%%
\begin{figure}[htbp]
\begin{center}
\subfigure[]{\includegraphics[width=0.75\textwidth,angle=0,height = 0.22\textheight]{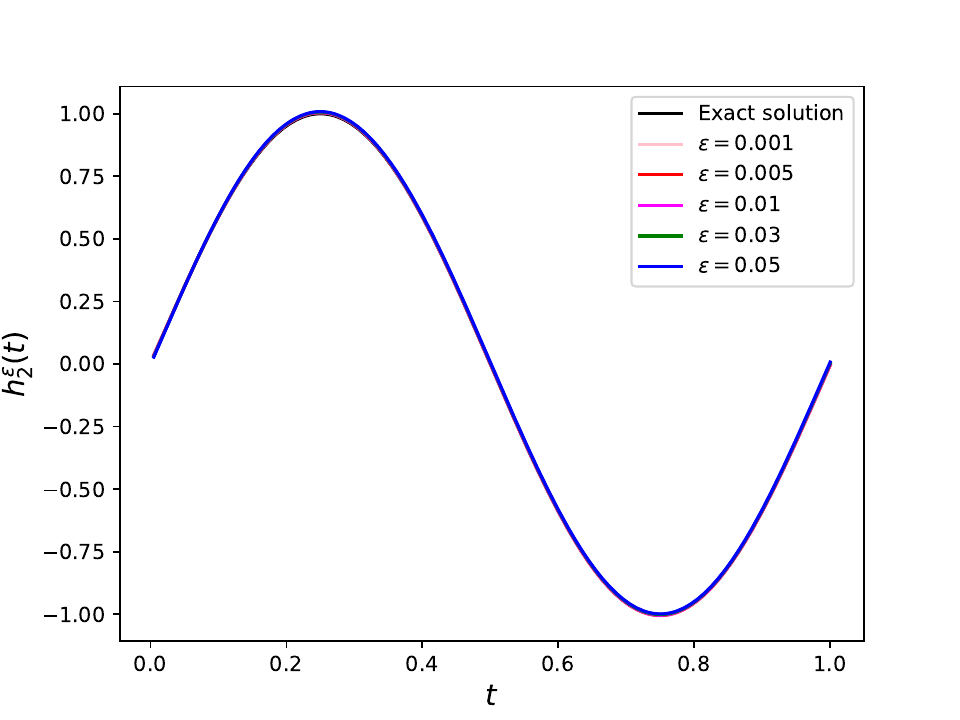}}
\subfigure[]{\includegraphics[width=0.75\textwidth,angle=0,height = 0.22\textheight]{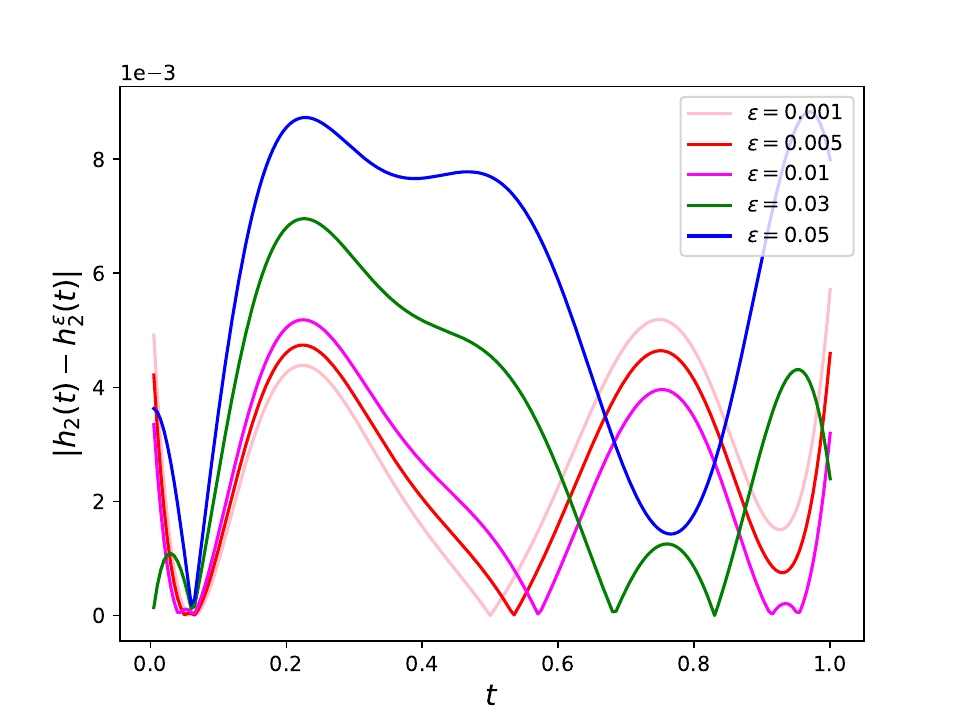}}
\end{center}
\vspace{-0.5cm}
\caption[Unknown time source: Results for Experiment 2]{Experiment 2 (1D): (a)~The exact source and its numerical approximation, and (b)~its corresponding absolute error, obtained for various levels of noise.
}
\label{Experiment2}
\end{figure}

%%%%%%%%%%%
\begin{figure}[htbp]
\begin{center}
\subfigure[]{\includegraphics[width=0.75\textwidth,angle=0,height = 0.22\textheight]{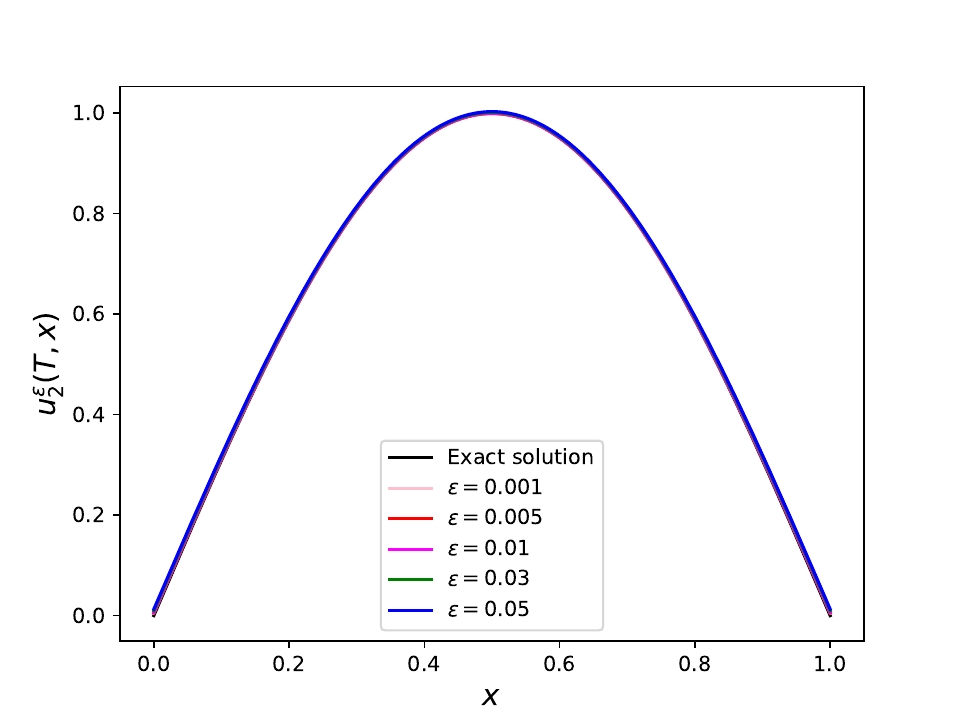}}
\subfigure[]{\includegraphics[width=0.75\textwidth,angle=0,height = 0.22\textheight]{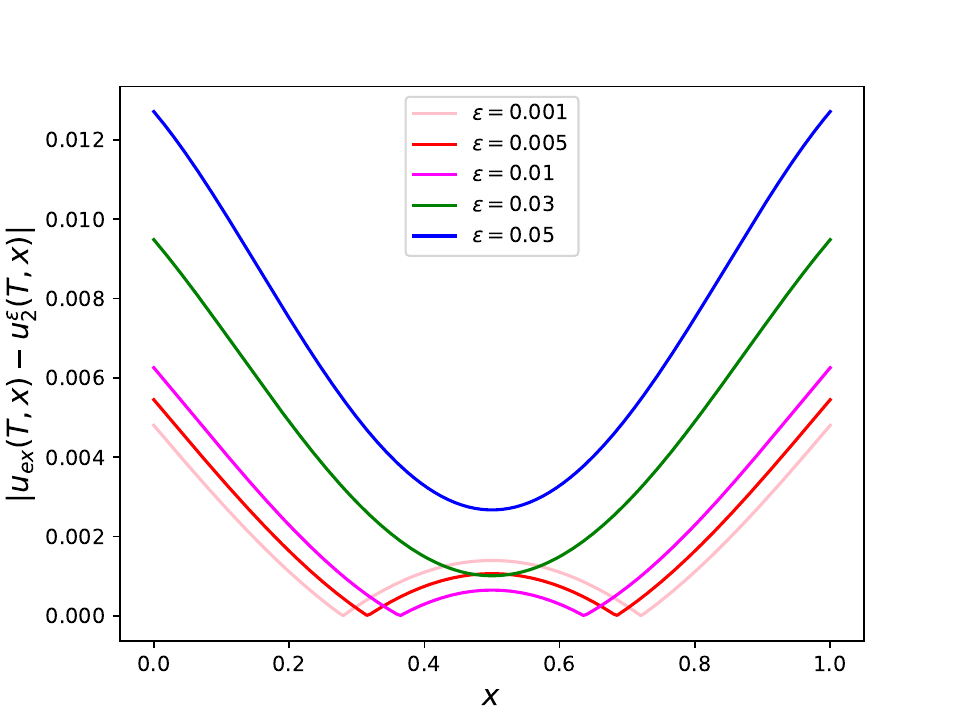}}
\end{center}
\vspace{-0.5cm}
\caption{Experiment 2 (1D): (a)~The exact solution at final time $T=1$ and its numerical approximation, and (b)~its corresponding absolute error, obtained for various levels of noise.}
\label{Experiment2u}
\end{figure}

%%%%%%%%%%%%%%%%%%%%%%%%%%%%%%%%%%%%%%%%%%%%%%%%%%%%%%%%%%%%%%%
\subsection{Two-dimensional experiments with noisy data}
\label{subsec:2dnoisy}
%%%%%%%%%%%%%%%

We remind that $\Omega = (0,1)^2$. 
As a  third experiment, we consider the exact solution 
\[
  u_3(t,x,y)=\exp(-t)\sin(\pi x)\sin(\pi y),\qquad
  h_3(t)=\exp(-t),
\]
with data
\[
   p_3(x,y)=-\pi^2\sin(\pi x)\sin(\pi y), \quad F_3(u)=-u, 
\]
and
\[
 f_3(t,x,y) = \pi^2(t+1)\exp(-t)\sin(\pi x) \sin(\pi y), \quad m_3(t)=\frac{4}{\pi^2}\exp(-t).
\]
The exact solution and data for Experiment~4 are
\[
  u_4(t,x,y)=\cos(2\pi t)\sin(\pi x)\sin(\pi y),\qquad
  h_4(t)=\sin(2\pi t),
\]
with 
\[
  p_4(x)=-2(\pi+\pi^3)\sin(\pi x) \sin(\pi y), \quad F_4(u)=2\pi^2 u,
\]
and
\[
f_4(t,x,y)= 2\pi^2 t\sin(\pi x)\sin(\pi y)\cos(2\pi t), \quad m_4(t) = \frac{4}{\pi^2}\cos(2\pi t). 
\]
It is clear that for both experiments, the polynomial approximation degree will be the same as in the one-dimensional experiments, so degrees three and six, respectively. 
The results of both experiments are shown in  \Cref{Experiment3,Experiment3u,Experiment4,Experiment4u}, 
The obtained results 
demonstrate that our method successfully extends to two spatial dimensions while 
maintaining accurate reconstructions of both the temporal source $h(t)$ and the 
final state $u(1,x,y)$ across all noise levels (only depicted for $\epsilon=0.05)$.

%%%%%%%%%%%
\begin{figure}[htbp]
\begin{center}
\subfigure[]{\includegraphics[width=0.75\textwidth,angle=0,height = 0.22\textheight]{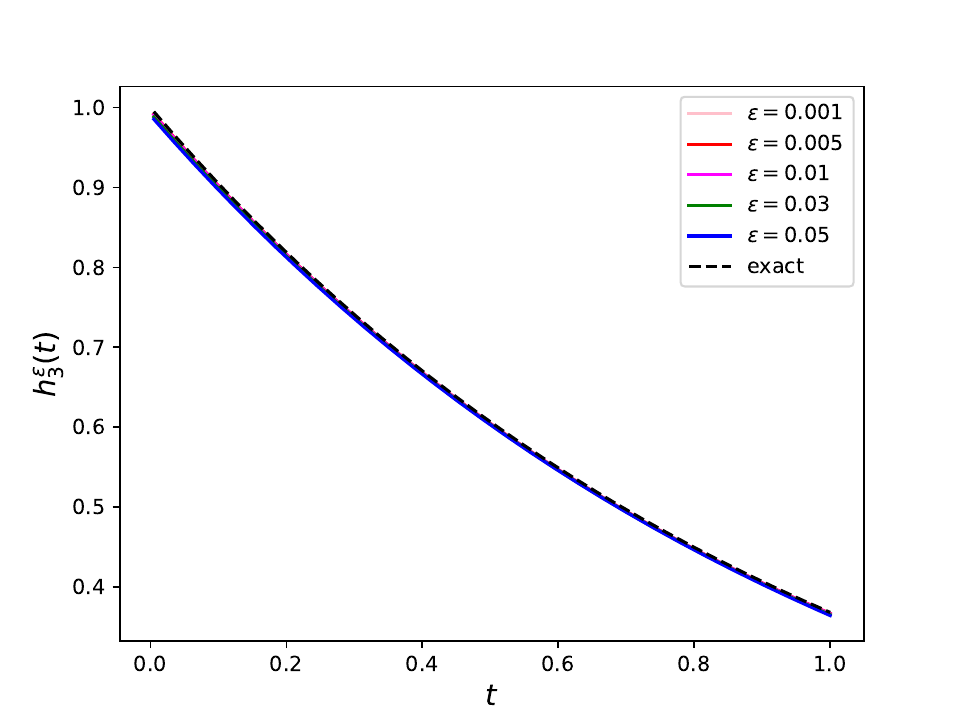}}
\subfigure[]{\includegraphics[width=0.75\textwidth,angle=0,height = 0.22\textheight]{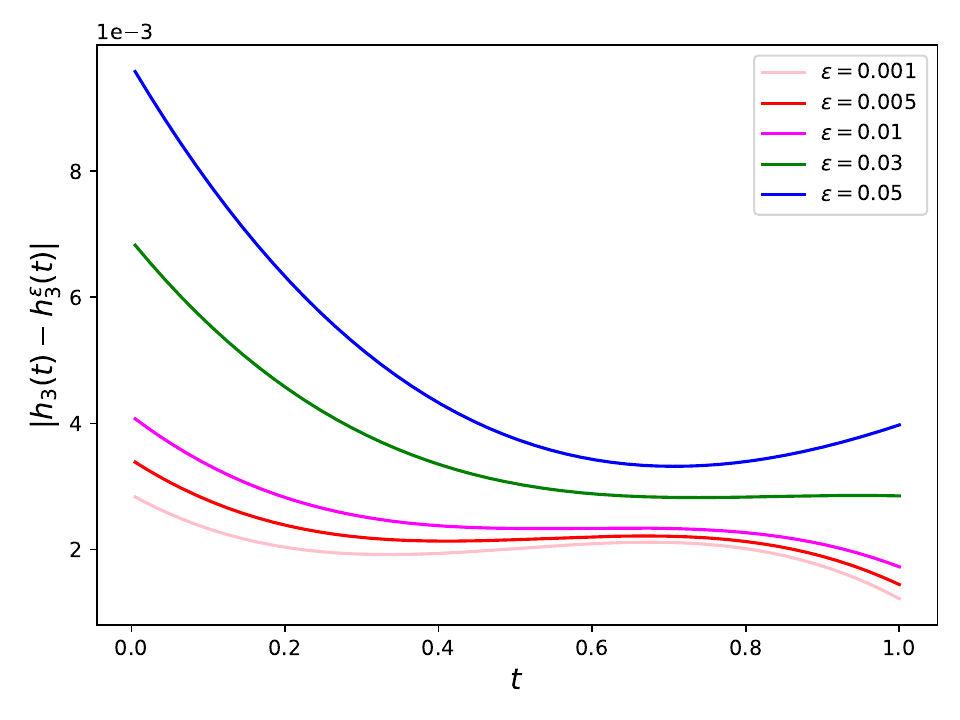}}
\end{center}
\vspace{-0.5cm}
\caption[Unknown time source: Results for Experiment 3]{Experiment 3 (2D): (a)~The exact source and its numerical approximation, and (b)~its corresponding absolute error, obtained for various levels of noise.
}
\label{Experiment3}
\end{figure}

%%%%%%%%%%%
\begin{figure}[htbp]
\begin{center}
\subfigure[]{\includegraphics[width=0.75\textwidth,angle=0,height = 0.22\textheight]{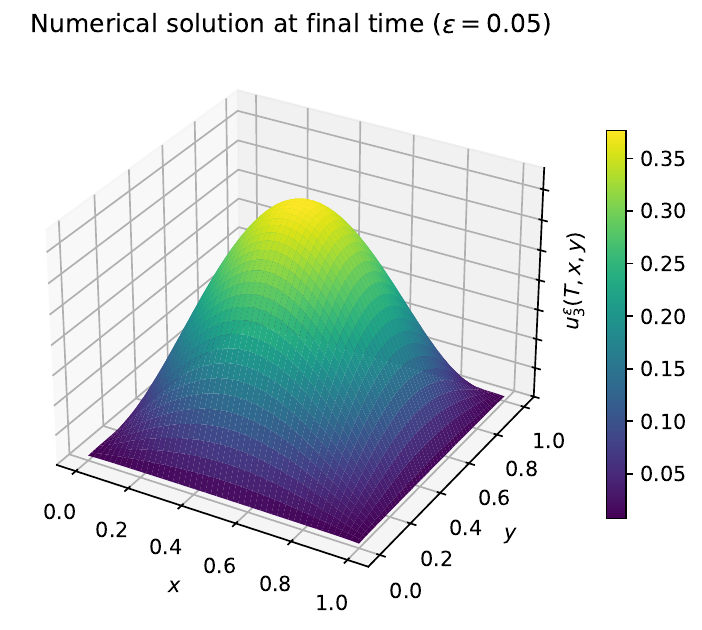}}
\subfigure[]{\includegraphics[width=0.75\textwidth,angle=0,height = 0.22\textheight]{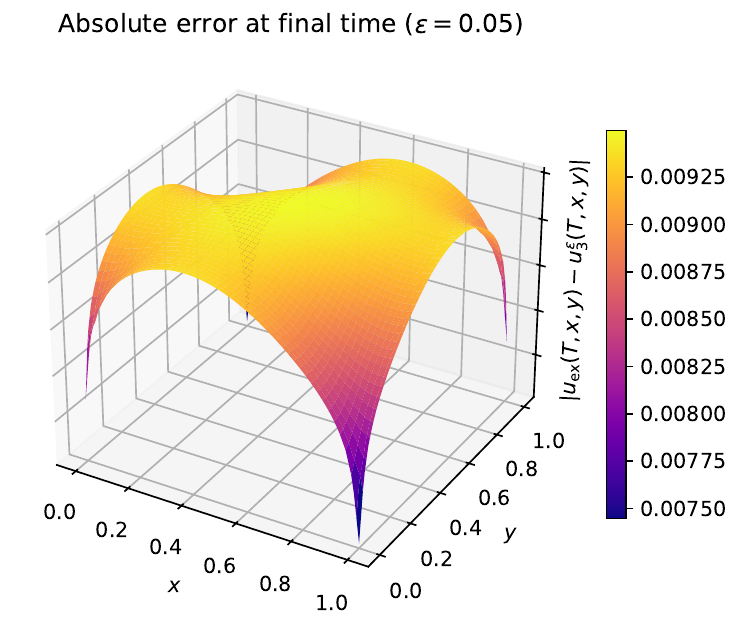}}
\end{center}
\vspace{-0.5cm}
\caption{Experiment 3 (2D): (a)~The exact solution at final time $T=1$ and its numerical approximation for $\epsilon = 0.05$, and (b)~its corresponding absolute error, obtained for various levels of noise.}
\label{Experiment3u}
\end{figure}

%%%%%%%%%%%
\begin{figure}[htbp]
\begin{center}
\subfigure[]{\includegraphics[width=0.75\textwidth,angle=0,height = 0.22\textheight]{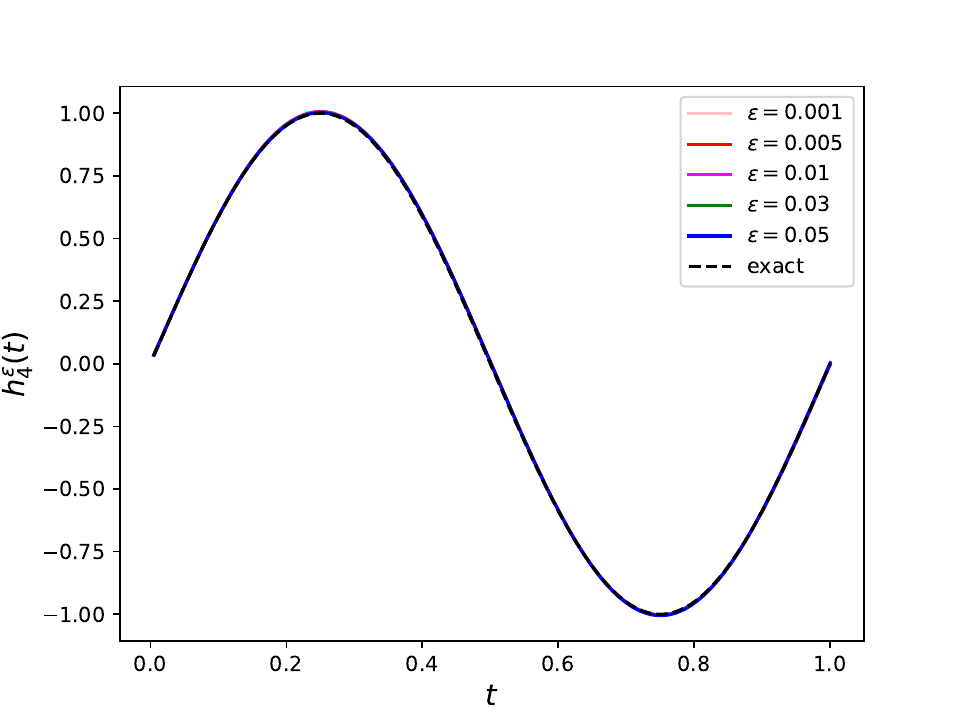}}
\subfigure[]{\includegraphics[width=0.75\textwidth,angle=0,height = 0.22\textheight]{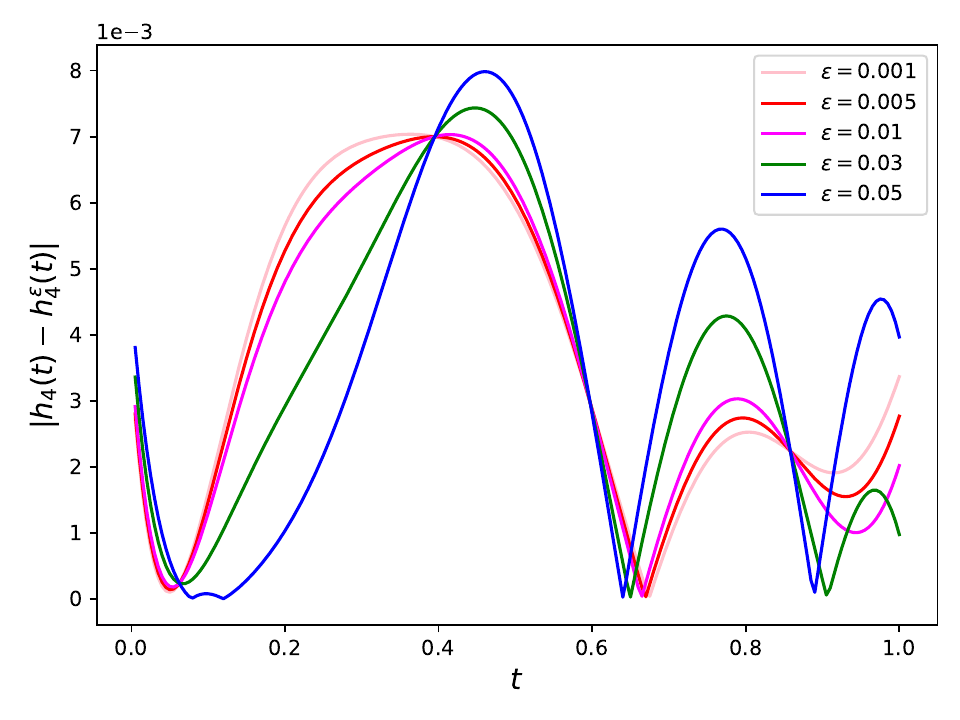}}
\end{center}
\vspace{-0.5cm}
\caption[Unknown time source: Results for Experiment 4]{Experiment 4 (2D): (a)~The exact source and its numerical approximation, and (b)~its corresponding absolute error, obtained for various levels of noise.
}
\label{Experiment4}
\end{figure}

%%%%%%%%%%%
\begin{figure}[htbp]
\begin{center}
\subfigure[]{\includegraphics[width=0.75\textwidth,angle=0,height = 0.22\textheight]{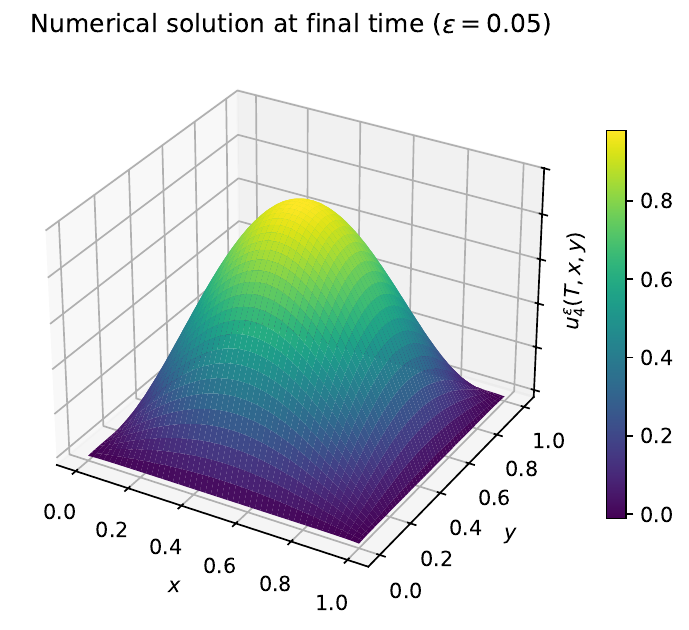}}
\subfigure[]{\includegraphics[width=0.75\textwidth,angle=0,height = 0.22\textheight]{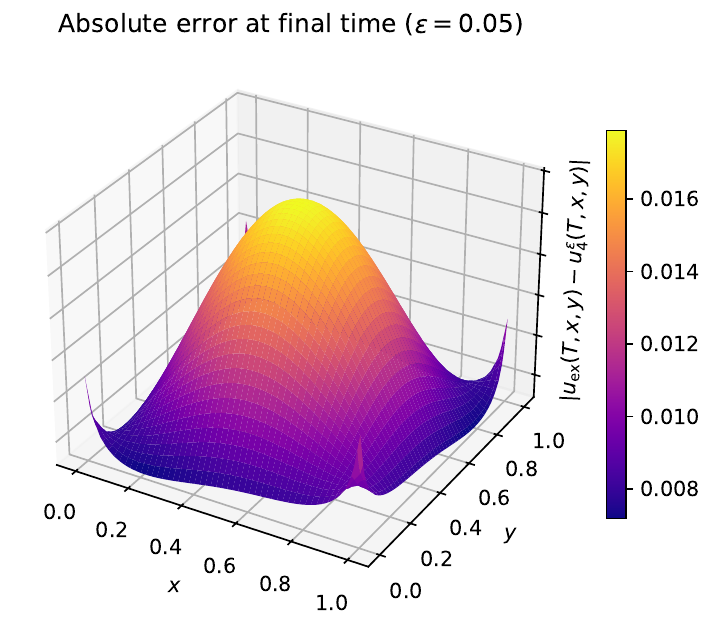}}
\end{center}
\vspace{-0.5cm}
\caption{Experiment 4 (2D): (a)~The exact solution at final time $T=1$ and its numerical approximation for $\epsilon = 0.05$, and (b)~its corresponding absolute error, obtained for various levels of noise.}
\label{Experiment4u}
\end{figure}

%%%%%%%%%%%
\section{Conclusion}
%%%%%%%%%%%

In this study, we addressed the inverse problem of determining an unknown time-dependent source term in a semilinear pseudo-parabolic equation (formulated on a bounded domain with variable diffusion and damping coefficients and a Neumann boundary condition) from an integral overdetermination measurement. By applying Rothe's method, we established the existence and uniqueness of a weak solution under appropriate assumptions on the data. The Neumann boundary condition and the integral measurement over the entire domain were crucial in our approach. Numerically, we developed a robust time-stepping scheme that effectively handles noisy data through polynomial regularisation. More specifically, we regularise $m^\epsilon(t)$ by fitting low-degree polynomials, with the optimal degree selected by analysing relative improvements in the approximation error. Our experiments demonstrate that 
(i) the scheme achieves first-order convergence rates in a noise-free setting;  (ii) successful reconstruction across various noise levels ($\epsilon = 0.001$--$0.05$), also for oscillatory solutions; and (iii) the method extends successfully to two spatial dimensions.
Future work may explore the same inverse problem under a Dirichlet boundary condition and/or the local measurement $\int_\Omega u (t,\X) \omega(\X) \dX = m(t)$.

\section*{Funding} Dr.\ Kh. Khompysh is supported by grant no. AP23486218
 of the Ministry of Science and High Education of the Republic
of Kazakhstan (MES RK).

Dr.\ K. Van Bockstal is supported by the Methusalem programme of the Ghent University Special Research Fund (BOF) grant number 01M01021, and the FWO Senior Research Grant G083525N.

        \bibliography{refs}

\begin{thebibliography}{10}

\bibitem{Ting:1963}
T.~W. Ting.
\newblock Certain non-steady flows of second-order fluids.
\newblock {\em Arch. Rational Mech. Anal.}, 14:1--26, 1963.

\bibitem{BBM:1972}
T.~B. Benjamin, J.~L. Bona, and J.~J. Mahony.
\newblock Model equations for long waves in nonlinear dispersive systems.
\newblock {\em Philos. Trans. Roy. Soc. London Ser. A}, 272(1220):47--78, 1972.

\bibitem{Pad:2004}
V.~Padr\'on.
\newblock Effect of aggregation on population recovery modeled by a
  forward-backward pseudoparabolic equation.
\newblock {\em Trans. Amer. Math. Soc.}, 356(7):2739--2756, 2004.

\bibitem{BZK:1960}
I.~Kochina G.I.~Barenblatt, I.~Zheltov.
\newblock Basic concepts in the theory of seepage of homogeneous liquids in
  fissured rocks.
\newblock {\em J. Appl. Math. Mech.}, 24:1286--1303, 1960.

\bibitem{6BER:1989}
V.M.~Ryzhik G.I.~Barenblatt, V.M.~Entov.
\newblock Theory of fluid flows through natural rocks.
\newblock {\em Internat. J. Non-Linear Mech.}, 1989.

\bibitem{Hui:1968}
R.~Huilgol.
\newblock A second order fluid of the differential type.
\newblock {\em Internat. J. Non-Linear Mech.}, 3:471--482, 1968.

\bibitem{AKS2011}
A.~B. Al'shin, M.~O. Korpusov, and A.~G. Sveshnikov.
\newblock {\em Blow-up in nonlinear {S}obolev type equations}, volume~15 of
  {\em De Gruyter Series in Nonlinear Analysis and Applications}.
\newblock Walter de Gruyter \& Co., Berlin, 2011.

\bibitem{zvy-2010}
V.~G. Zvyagin and M.~V. Turbin.
\newblock Investigation of initial-boundary value problems for mathematical
  models of the motion of {K}elvin-{V}oigt fluids.
\newblock {\em Sovrem. Mat. Fundam. Napravl.}, 31:3--144, 2009.

\bibitem{Cannon1986}
John~R. Cannon and John van~der Hoek.
\newblock Diffusion subject to the specification of mass.
\newblock {\em J. Math. Anal. Appl.}, 115:517--529, 1986.

\bibitem{POV:2000}
Aleksey~I. Prilepko, Dmitry~G. Orlovsky, and Igor~A. Vasin.
\newblock {\em Methods for solving inverse problems in mathematical physics},
  volume 231 of {\em Monographs and Textbooks in Pure and Applied Mathematics}.
\newblock Marcel Dekker, Inc., New York, 2000.

\bibitem{cannon1987galerkin}
John~R Cannon, Salvador~Perez Esteva, and John Van Der~Hoek.
\newblock A galerkin procedure for the diffusion equation subject to the
  specification of mass.
\newblock {\em SIAM Journal on Numerical Analysis}, 24(3):499--515, 1987.

\bibitem{cannon1986diffusion}
John~R Cannon and John van~der Hoek.
\newblock Diffusion subject to the specification of mass.
\newblock {\em Journal of mathematical analysis and applications},
  115(2):517--529, 1986.

\bibitem{shi1992design}
Peter Shi and Meir Shillor.
\newblock On design of contact patterns in one-dimensional thermoelasticity.
\newblock {\em Theoretical Aspects of Industrial Design, Society for Industrial
  and Applied Mathematics, Philadelphia, PA}, 1992.

\bibitem{ionkin1977solution}
Nikolai~Ivanovich Ionkin.
\newblock The solution of a certain boundary value problem of the theory of
  heat conduction with a nonclassical boundary condition.
\newblock {\em Differentsial'nye uravneniya}, 13(2):294--304, 1977.

\bibitem{kamynin1964boundary}
Leonid~Ivanovich Kamynin.
\newblock A boundary value problem in the theory of heat conduction with a
  nonclassical boundary condition.
\newblock {\em USSR Computational Mathematics and Mathematical Physics},
  4(6):33--59, 1964.

\bibitem{cannon1963}
J.~R. Cannon.
\newblock The solution of the heat equation subject to the specification of
  energy.
\newblock {\em Quarterly of Applied Mathematics}, 21:155--160, 1963.

\bibitem{GRIMMONPREZ2015331}
Marijke Grimmonprez and Marián Slodička.
\newblock Reconstruction of an unknown source parameter in a semilinear
  parabolic problem.
\newblock {\em Journal of Computational and Applied Mathematics}, 289:331--345,
  2015.
\newblock Sixth International Conference on Advanced Computational Methods in
  Engineering (ACOMEN 2014).

\bibitem{ewing1991class}
Richard~E Ewing and Tao Lin.
\newblock A class of parameter estimation techniques for fluid flow in porous
  media.
\newblock {\em Advances in water resources}, 14(2):89--97, 1991.

\bibitem{R:1980}
W.~Rundell.
\newblock Determination of an unknown non-homogeneous term in a linear partial
  differential equation from overspecified boundary data.
\newblock {\em Applicable Analysis}, 10(3):231--242, 1980.

\bibitem{Fu2023}
J.-L. Fu and J.~Liu.
\newblock Recovery of a potential coefficient in a pseudoparabolic system from
  nonlocal observation.
\newblock {\em Appl. Numer. Math.}, 184:121--136, 2023.

\bibitem{Fu2023a}
Jun-Liang Fu and Jijun Liu.
\newblock On the determination of the spatial-dependent potential coefficient
  in a linear pseudoparabolic equation.
\newblock {\em Adv. Comput. Math.}, 49(2):31, 2023.
\newblock Id/No 28.

\bibitem{AntAA:2020}
S.~N. Antontsev, S.~E. Aitzhanov, and G.~R. Ashurova.
\newblock An inverse problem for the pseudo-parabolic equation with
  p-{L}aplacian.
\newblock {\em Evol. Equ. Control Theory}, 11(2):399--414, 2022.

\bibitem{KhSh:2023}
Kh. Khompysh and A.~G. Shakir.
\newblock An inverse source problem for a nonlinear pseudoparabolic equation
  with {$p$}-{L}aplacian diffusion and damping term.
\newblock {\em Quaest. Math.}, 46(9):1889--1914, 2023.

\bibitem{Yaman2012}
M.~Yaman.
\newblock Blow-up solution and stability to an inverse problem for a
  pseudo-parabolic equation.
\newblock {\em Journal of Inequalities and Applications}, 2012:274, 2012.

\bibitem{LyTa:2011}
A.~Sh. Lyubanova and A.~Tani.
\newblock An inverse problem for pseudoparabolic equation of filtration: the
  existence, uniqueness and regularity.
\newblock {\em Appl. Anal.}, 90(10):1557--1571, 2011.

\bibitem{LyVe:2019}
A.~Sh. Lyubanova and A.~V. Velisevich.
\newblock Inverse problems for the stationary and pseudoparabolic equations of
  diffusion.
\newblock {\em Appl. Anal.}, 98(11):1997--2010, 2019.

\bibitem{HDT:2021}
M.~J. Huntul, Neeraj Dhiman, and Mohammad Tamsir.
\newblock Reconstructing an unknown potential term in the third-order
  pseudo-parabolic problem.
\newblock {\em Comput. Appl. Math.}, 40(4):Paper No. 140, 18, 2021.

\bibitem{HTI:2024}
M.~J. Huntul, I.~Tekin, M.~K. Iqbal, and M.~Abbas.
\newblock An inverse problem of reconstructing the unknown coefficient in a
  third order time fractional pseudoparabolic equation.
\newblock {\em An. Univ. Craiova Ser. Mat. Inform.}, 51(1):54--81, 2024.

\bibitem{KHSI:2024}
M.~J. Huntul, Kh. Khompysh, M.~K. Shazyndayeva, and M.~K. Iqbal.
\newblock An inverse source problem for a pseudoparabolic equation with memory.
\newblock {\em AIMS Math.}, 9(6):14186--14212, 2024.

\bibitem{Khompysh2025}
Kh. Khompysh, M.J. Huntul, and M.~Mukhambetkaliyev.
\newblock Time-dependent inverse source problems for a pseudoparabolic equation
  with memory.
\newblock {\em Computers \& Mathematics with Applications}, 198:239--254, 2025.

\bibitem{Rothe1930}
E.~Rothe.
\newblock Zweidimensionale parabolische randwertaufgaben als grenz\-fall
  eindimensionaler randwertaufgaben.
\newblock {\em Mathematische Annalen}, 102(1):650--670, 1930.

\bibitem{Slodicka2014jcam}
M.~Slodi\v{c}ka.
\newblock Determination of a solely time-dependent source in a semilinear
  parabolic problem by means of boundary measurements.
\newblock {\em Journal of Computational and Applied Mathematics}, 289:433--440,
  2015.

\bibitem{Slodicka2016b}
M.~Slodi\v{c}ka and M.~Galba.
\newblock Recovery of a time dependent source from a surface measurement in
  {M}axwell’s equations.
\newblock {\em Computers \& Mathematics with Applications}, 71(1):368 -- 380,
  2016.

\bibitem{VanBockstal2017}
K.~Van~Bockstal and M.~Slodi\v{c}ka.
\newblock Recovery of a time-dependent heat source in one-dimensional
  thermoelasticity of type-{III}.
\newblock {\em Inverse Problems in Science and Engineering}, 25(5):749--770,
  2017.

\bibitem{Kang2018}
T.~Kang, K.~Van~Bockstal, and R.~Wang.
\newblock {The reconstruction of a time-dependent source from a surface
  measurement for full Maxwell's equations by means of the potential field
  method}.
\newblock {\em Computers \& Mathematics with Applications}, 75(3):764--786,
  2018.

\bibitem{Siskova2019}
K.~\v{S}i\v{s}kov\'{a} and M.~Slodi\v{c}ka.
\newblock Identification of a source in a fractional wave equation from a
  boundary measurement.
\newblock {\em Journal of Computational and Applied Mathematics}, 349:172--186,
  2019.

\bibitem{VanBockstal2020}
M.~Grimmonprez, L.~Marin, and K.~Van~Bockstal.
\newblock {The reconstruction of a solely time-dependent load in a simply
  supported non-homogeneous Euler–Bernoulli beam}.
\newblock {\em {Applied Mathematical Modelling}}, 79:914--933, 2020.

\bibitem{VanBockstal2022a}
A.~S. Hendy and K.~Van~Bockstal.
\newblock A solely time-dependent source reconstruction in a multiterm
  time-fractional order diffusion equation with non-smooth solutions.
\newblock {\em Numer. Algorithms}, 90(2):809--832, 2022.

\bibitem{VanBockstal2022b}
A.~S. Hendy and K.~Van~Bockstal.
\newblock On a reconstruction of a solely time-dependent source in a
  time-fractional diffusion equation with non-smooth solutions.
\newblock {\em J. Sci. Comput.}, 90(1):33, 2022.
\newblock Id/No 41.

\bibitem{VanBockstal2022c}
K.~Van~Bockstal and L.~Marin.
\newblock Finite element method for the reconstruction of a time-dependent heat
  source in isotropic thermoelasticity systems of type-{III}.
\newblock {\em Zeitschrift für angewandte Mathematik und Physik}, 73(3):113,
  2022.

\bibitem{Kacur1985}
J.~Ka\v{c}ur.
\newblock {\em Method of Rothe in evolution equations}, volume~80 of {\em
  Teubner Texte zur Mathematik}.
\newblock Teubner, Leipzig, 1985.

\bibitem{Ciarlet2013}
P.~G. Ciarlet.
\newblock {\em {Linear and Nonlinear Functional Analysis with Applications}}.
\newblock Applied mathematics. Society for Industrial and Applied Mathematics,
  2013.

\bibitem{FEniCSx1}
M.~W. Scroggs, J.~S. Dokken, C.~N. Richardson, and G.~N. Wells.
\newblock Construction of arbitrary order finite element degree-of-freedom maps
  on polygonal and polyhedral cell meshes.
\newblock {\em ACM Trans. Math. Software}, 48(2):Art. 18, 23, 2022.

\bibitem{FEniCSx2}
M.~W. Scroggs, I.~A. Baratta, C.~N. Richardson, and G.~N. Wells.
\newblock Basix: a runtime finite element basis evaluation library.
\newblock {\em Journal of Open Source Software}, 7(73):3982, 2022.

\bibitem{FEniCSx3}
M.~S. Aln{\ae}s, A.~Logg, K.~B. {\O}lgaard, M.~E. Rognes, and G.~N. Wells.
\newblock Unified form language: a domain-specific language for weak
  formulations and partial differential equations.
\newblock {\em ACM Trans. Math. Software}, 40(2):Art. 9, 37, 2014.

\end{thebibliography}
		\bibliographystyle{unsrt}%abbrv
\end{document}